\newcommand{\bbC}{{\mathbb{C}}}
\newcommand{\bbD}{{\mathbb{D}}}
\newcommand{\bbL}{{\mathbb{L}}}
\newcommand{\bbR}{{\mathbb{R}}}
\newcommand{\bbS}{{\mathbb{S}}}
\newcommand{\bbZ}{{\mathbb{Z}}}
\newcommand{\fre}{{\frak{e}}}
\newcommand{\dott}{\,\cdot\,}
\newcommand{\lb}{\label}
\newcommand{\f}{\frac}
\newcommand{\ol}{\overline}
\newcommand{\ti}{\tilde  }
\newcommand{\wti}{\widetilde  }
\newcommand{\Av}{\text{\rm{Av}}}
\newcommand{\Arg}{\text{\rm{Arg}}}
\newcommand{\dist}{\text{\rm{dist}}}
\newcommand{\ess}{\text{\rm{ess}}}
\newcommand{\ac}{\text{\rm{ac}}}
\newcommand{\s}{\text{\rm{s}}}
\newcommand{\supp}{\text{\rm{supp}}}
\newcommand{\bi}{\bibitem}
\newcommand{\beq}{\begin{equation}}
\newcommand{\eeq}{\end{equation}}
\newcommand{\ba}{\begin{align}}
\newcommand{\ea}{\end{align}}
\newcommand{\veps}{\varepsilon}
\newcommand{\comm}[1]{}
\def\R{{\mathbb{R}}}
\def\C{{\mathbb{C}}}
\def\K{{\mathcal {L}}}
\newcounter{smalllist}
\newenvironment{SL}{\begin{list}{{\rm\roman{smalllist})}}{%
\setlength{\topsep}{0mm}\setlength{\parsep}{0mm}\setlength{\itemsep}{0mm}%
\setlength{\labelwidth}{2em}\setlength{\leftmargin}{2em}\usecounter{smalllist}%
}}{\end{list}}
\DeclareMathOperator{\Real}{Re}
\DeclareMathOperator{\Ima}{Im}
\numberwithin{equation}{section}
\newtheorem{theorem}{Theorem}[section]
\newtheorem*{t1}{Theorem 1}
\newtheorem*{t2}{Theorem 2}
\newtheorem*{t3}{Theorem 3}
\newtheorem*{t4}{Theorem 4}
\newtheorem*{t5}{Theorem 5}
\newtheorem{proposition}[theorem]{Proposition}
\newtheorem{lemma}[theorem]{Lemma}
\newtheorem{corollary}[theorem]{Corollary}
\theoremstyle{definition}
\theoremstyle{remark}
\newtheorem*{remark}{Remark}
\newtheorem*{remarks}{Remarks}
\newtheorem*{definition}{Definition}
\newtheorem*{conjecture}{Conjecture}
\newcommand{\abs}[1]{\lvert#1\rvert}
\newcommand{\norm}[1]{\lVert#1\rVert}
\begin{document}
\title[Universality for Ergodic Jacobi Matrices]
{Bulk Universality and Clock Spacing\\
of Zeros for Ergodic Jacobi Matrices\\
with A.C.\ Spectrum}
\author[A.~Avila, Y.~Last, and B.~Simon]{Artur Avila$^1$, Yoram Last$^{2,4}$,
and Barry Simon$^{3,4}$}

\thanks{$^1$ CNRS UMR 7599, Laboratoire de Probabilit\'es et Mod\`eles Al\'eatoires,
Universit\'e Pierre et Marie Curie--Bo\^{i}te Courrier 188, 75252--Paris Cedex 05, France.
Current address: IMPA, Estrada Dona Castorina 110, Rio de Janeiro,
22460-320, Brazil.
E-mail: artur@math.sunysb.edu.}

\thanks{$^2$ Institute of Mathematics, The Hebrew University, 91904 Jerusalem, Israel.
E-mail: ylast@math.huji.ac.il. Supported in part by The Israel Science
Foundation (grant no.\ 1169/06)}

\thanks{$^3$ Mathematics 253-37, California Institute of Technology, Pasadena, CA 91125, USA.
E-mail: bsimon@caltech.edu. Supported in part by NSF grant DMS-0652919}

\thanks{$^4$ Research supported in part
by Grant No.\ 2006483 from the United States-Israel Binational Science
Foundation (BSF), Jerusalem, Israel}

\date{September 29, 2008}
\keywords{Orthogonal polynomials, clock behavior, almost Mathieu equation}
\subjclass[2000]{42C05,26C10,47B36}

\begin{abstract} By combining some ideas of Lubinsky with some soft analysis, we prove that universality and
clock  behavior of zeros for OPRL in the a.c.\ spectral region is implied by convergence of $\f{1}{n} K_n(x,x)$
for the diagonal CD kernel and boundedness of the analog associated to second kind polynomials. We then show
that these hypotheses are always valid for ergodic Jacobi matrices with a.c.\ spectrum and prove that the
limit of $\f{1}{n} K_n(x,x)$ is $\rho_\infty(x)/w(x)$ where $\rho_\infty$ is the density of zeros and $w$ is
the a.c.\ weight of the spectral measure.
\end{abstract}

\maketitle

\section{Introduction} \lb{s1}

Given a finite measure, $d\mu$, of compact and not finite support on $\bbR$, one defines the orthonormal
polynomials, $p_n(x)$ (or $p_n (x,d\mu)$ if the $\mu$-dependence is important), by applying Gram--Schmidt
to $1,x,x^2, \dots$. Thus, $p_n$ is a polynomial of degree exactly $n$ with leading positive coefficient so
that
\begin{equation} \lb{1.1}
\int p_n(x) p_m(x)\, d\mu(x) = \delta_{nm}
\end{equation}
See \cite{SzBk,FrBk,Rice} for background on these OPRL (orthogonal polynomials on the real line).

Associated to $\mu$ is a family of Jacobi parameters $\{a_n,b_n\}_{n=1}^\infty$, $a_n >0$, $b_n$ real,
determined by the recursion relation ($p_{-1}(x)\equiv 0$)
\begin{equation} \lb{1.2}
xp_n(x) = a_{n+1} p_{n+1}(x) + b_{n+1} p_n(x) + a_n p_{n-1}(x)
\end{equation}
The $\{p_n(x)\}_{n=0}^\infty$ are an orthonormal basis of $L^2(\bbR,d\mu)$ (since $\supp(d\mu)$ is
compact) and \eqref{1.2} says that multiplication by $x$ is given in this basis by the tridiagonal
Jacobi matrix
\begin{equation} \lb{1.3}
J=
\begin{pmatrix}
b_1 & a_1 & 0 &  \cdots \\
a_1 & b_2 & a_2 &  \cdots \\
0 & a_2 & b_3 &  \cdots \\
\vdots & \vdots & \vdots  & \ddots
\end{pmatrix}
\end{equation}

If we restrict (as we normally will) to $\mu$ normalized by $\mu(\bbR)=1$, then $\mu$ can be recovered
from $J$ as the spectral measure for the vector $(1,0,0,\dots)^t$. Favard's theorem says there is a
one-one correspondence between sets of bounded Jacobi parameters, that is,
\begin{equation} \lb{1.4}
\sup_n\, \abs{a_n} =\alpha_+ <\infty \qquad \sup_n \, \abs{b_n}=\beta <\infty
\end{equation}
and probability measures with compact and not finite support under this $\mu\to J\to\mu$ correspondence.

We will use this to justify spectral theory notation for things like $\supp(d\mu)$ which we will denote
$\sigma(d\mu)$ since it is the spectrum of $J$, $\sigma(J)$. We will use $\sigma_\ess(d\mu)$ for the
essential spectrum, and if
\begin{equation} \lb{1.5}
d\mu(x) =w(x)\, dx + d\mu_\s (x)
\end{equation}
where $d\mu_\s$ is Lebesgue singular, then we define
\begin{equation} \lb{1.6}
\Sigma_\ac (d\mu) =\{x\mid w(x) >0\}
\end{equation}
determined up to sets of Lebesgue measure $0$, so $\Sigma_\ac\neq\emptyset$ means $d\mu$ has a
nonvanishing a.c.\ part.

We will also suppose
\begin{equation} \lb{1.7}
\inf_n\, a_n = \alpha_- >0
\end{equation}
which is no loss since it is known \cite{Dom78} that if the $\inf$ is $0$, then $\Sigma_\ac =\emptyset$,
and we will only be interested in cases where $\Sigma_\ac \neq\emptyset$.

One of our concerns in this paper is the zeros of $p_n(x,d\mu)$. These are not only of intrinsic interest;
they enter in Gaussian quadrature and also as the eigenvalues of $J_{n;F}$, the upper left $n\times n$
corner of $J$, and so, relevant to statistics of eigenvalues in large boxes, a subject on which there is
an enormous amount of discussion in both the mathematics and the physics literature.

These zeros are all simple and lie in $\bbR$. $d\nu_n$ is the normalized counting measure for the zeros,
that is,
\begin{equation} \lb{1.8}
\nu_n(S)=\f{1}{n}\, \#(\text{zeros of $p_n$ in $S$})
\end{equation}
In many cases, $d\nu_n$ converges to a weak limit, $d\nu_\infty$, called the density of zeros or density
of states (DOS). If this weak limit exists, we say that the DOS exists. It often happens that $d\nu_\infty$
is $d\rho_\fre$, the equilibrium measure for $\fre =\sigma_\ess(d\mu)$. This is true, for example, if
$\rho_\fre$ is equivalent to $dx\restriction\fre$ and $\Sigma_\ac=\fre$, a theorem of Widom \cite{Wid}
and Van Assche \cite{vA} (see also Stahl--Totik \cite{StT} and Simon \cite{EqMC}). If $d\nu_\infty$ has
an a.c.\ part, we use $\rho_\infty (x)$ for $d\nu_\infty /dx$ and we use $\rho_\fre(x)$ for $d\rho_\fre/dx$.
More properly, $d\nu_\infty$ is the ``density of states measure'' (so $\int_{-\infty}^x d\nu_\infty$ is
the ``integrated density of states'') and $\rho_\infty (x)$ the ``density of
states.''

We are especially interested in the fine structure of the zeros near some point $x_0\in\sigma(d\mu)$. We
define $x_j^{(n)}(x_0)$ by
\begin{equation} \lb{1.9}
x_{-2}^{(n)}(x_0) < x_{-1}^{(n)}(x_0) < x_0 \leq x_0^{(n)}(x_0) < x_1^{(n)}(x_0) < \dots
\end{equation}
requiring these to be all of the zeros near $x_0$. It is known that if $x_0$ is not isolated from
$\sigma(d\mu)$ on either side, that is, for all $\delta >0$,
\begin{equation} \lb{1.10}
(x_0-\delta, x_0) \cap \sigma(d\mu) \neq \emptyset \neq (x_0, x_0+\delta) \cap \sigma(d\mu)
\end{equation}
then for each fixed $j$,
\begin{equation} \lb{1.11}
\lim_{n\to\infty}\, x_j^{(n)}(x_0) =x_0
\end{equation}
We are interested in clock behavior named after the spacing of numerals on a clock---meaning equal spacing
of the zeros nearby to $x_0$:

\begin{definition} We say that there is {\it quasi-clock behavior\/} at $x_0\in\sigma(d\mu)$ if and only
if for each fixed $j\in\bbZ$,
\begin{equation} \lb{1.12}
\lim_{n\to\infty}\, \f{x_{j+1}^{(n)}(x_0) - x_j^{(n)}(x_0)}{x_1^{(n)}(x_0) -x_0^{(n)}(x_0)} =1
\end{equation}
We say there is {\it strong clock behavior\/} at $x_0$ if and only if the DOS exists and for each fixed $j\in\bbZ$,
\begin{equation} \lb{1.13}
\lim_{n\to\infty}\, n(x_{j+1}^{(n)}(x_0) - x_j(x_0)) = \f{1}{\rho_\infty(x_0)}
\end{equation}
\end{definition}

Obviously, strong clock behavior implies quasi-clock behavior. Thus far, the only cases where it is proven there is
quasi-clock behavior, one has strong clock behavior but, as we will explain in Section~\ref{s7}, we think there are
examples where one has quasi-clock behavior at $x_0$ but not strong clock behavior. Before this paper, all examples
known with strong clock behavior have $\rho_\infty =\rho_\fre$, but we will find several examples where there is
strong clock behavior with $\rho_\infty \neq \rho_\fre$ in Section~\ref{s7}. In that section, we will say more about:

\begin{conjecture} For any $\mu$, quasi-clock behavior holds at a.e.\ $x_0\in\Sigma_\ac(d\mu)$.
\end{conjecture}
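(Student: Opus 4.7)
The plan is to deduce the conjecture from the universality criterion established earlier in this paper: quasi-clock behavior at $x_0$ follows once one verifies (i) existence of a finite, positive limit of $\f{1}{n} K_n(x_0, x_0)$, and (ii) boundedness of the analogous quantity $\f{1}{n} K_n^{(1)}(x_0, x_0)$ built from the second kind polynomials. The conjecture is thus equivalent to showing that (i) and (ii) hold at Lebesgue-a.e.\ $x_0 \in \Sigma_\ac(d\mu)$ for \emph{every} probability measure $\mu$ of compact and infinite support on $\bbR$.

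The natural value of the limit in (i) is $\rho_\infty(x_0)/w(x_0)$, so establishing (i) amounts to a M\'at\'e--Nevai--Totik theorem in the greatest possible generality. The easy direction,
\[
\liminf_{n\to\infty} \f{1}{n} K_n(x_0, x_0) \geq \f{\rho_\infty(x_0)}{w(x_0)},
\]
should follow from the variational principle
\[
\f{1}{K_n(x,x)} = \min\biggl\{\int \abs{Q}^2\, d\mu : \deg Q\leq n,\ Q(x)=1\biggr\}
\]
by inserting trial polynomials built from Fej\'er-type kernels adapted to the equilibrium measure of $\sigma_\ess(d\mu)$, in the spirit of Totik's polynomial-inverse-image construction; this part should go through for arbitrary $\mu$ by combining the trial-polynomial bound with Lebesgue differentiation applied to $w$.

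The opposite inequality $\limsup_n \f{1}{n} K_n(x_0,x_0) \leq \rho_\infty(x_0)/w(x_0)$ is the hard direction and the principal obstacle. All existing proofs of M\'at\'e--Nevai--Totik upper bounds invoke some form of Szeg\H{o} asymptotics for $p_n$, and are presently available only under global hypotheses such as Stahl--Totik regularity of $d\mu$, or trace-class or summable perturbations of a regular background. To settle the conjecture one would need a localized substitute that yields a sharp upper bound on the diagonal CD kernel at almost every point of the a.c.\ spectrum of an entirely arbitrary $\mu$; producing such a tool --- perhaps via a probabilistic averaging over translates, via reflectionless-like representations of $K_n$ at a.c.\ points, or via a direct Cesaro-averaged Szeg\H{o} identity at a.e.\ energy --- is, as far as I can see, where the real difficulty lies.

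Step (ii) should be comparatively soft once (i) is in hand: $K_n^{(1)}$ is the CD kernel for the once-stripped Jacobi matrix, whose spectral measure agrees with $d\mu$ on $\Sigma_\ac$ up to a multiplicative factor determined by the boundary $m$-function, and Jitomirskaya--Last-style averaged boundedness of transfer matrices at a.e.\ $x_0 \in \Sigma_\ac(d\mu)$ bounds $K_n^{(1)}(x_0,x_0)/K_n(x_0,x_0)$. Consequently, once the a.e.\ upper bound in (i) is proved, applying the same bound to the stripped measure gives (ii) automatically, and the reduction theorem of this paper closes the conjecture. The entire difficulty thus concentrates on producing an a.e.\ Christoffel-function upper bound valid for all $\mu$.
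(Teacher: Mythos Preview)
The statement you are attempting to prove is stated in the paper as an open \emph{conjecture}, not as a theorem; the paper does not prove it in full generality and in fact explicitly says (Section~\ref{s7}, item~3) that certain concrete examples remain ``a key test for the conjecture.'' What the paper does prove is the special case of ergodic Jacobi matrices (Corollary~\ref{C1.5}), via Theorems~3--5. So there is no ``paper's own proof'' to compare against.

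Your proposal is an honest research outline rather than a proof, and you correctly locate the obstruction: the a.e.\ upper bound $\limsup_n \f{1}{n} K_n(x_0,x_0)\leq \rho_\infty(x_0)/w(x_0)$ for \emph{arbitrary} $\mu$. That is precisely the missing ingredient, and nothing in the paper supplies it outside the ergodic setting. Your reduction to conditions~(3) and~(4) via Theorems~1--3 is essentially the paper's own machinery and is sound, though note two subtleties. First, for quasi-clock behavior one only needs condition~(4), i.e., existence of a finite positive limit of $\f{1}{n}K_n(x_0,x_0)$; the value $\rho_\infty/w$ is not required (and indeed, for general $\mu$ the DOS $d\nu_\infty$ need not even exist---see Section~\ref{s7}, item~3---so your liminf/limsup argument must be reformulated without reference to $\rho_\infty$). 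Second, your trial-polynomial lower bound would give $\rho_\fre/w$, which need not agree with any putative $\rho_\infty/w$ (cf.\ Theorem~\ref{T7.1}), so even matching the two bounds is not automatic. In short: your strategy is the natural one and aligns with the paper's framework, but the gap you flag is the genuine open problem, and your write-up does not close it.
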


In this paper, one of our main goals is to prove this result for ergodic Jacobi matrices. A major role will
be played by the CD (for Christoffel--Darboux) kernel, defined for $x,y\in\bbC$ by
\begin{equation} \lb{1.14}
K_n(x,y) =\sum_{j=0}^n \, \ol{p_j(x)}\, p_j(y)
\end{equation}
the integral kernel for the orthogonal projection onto polynomials of degree at most $n$ in $L^2 (\bbR,d\mu)$;
see Simon \cite{CD} for  a review of some important aspects of the properties and uses of this kernel.
We will repeatedly make use of the CD formula,
\begin{equation} \lb{1.15}
K_n(x,y) = \f{a_{n+1} [\, \ol{p_{n+1}(x)}\, p_n(y) - \ol{p_n(x)}\, p_{n+1}(y)]}{\bar
x-y},
\end{equation}
the Schwarz inequality,
\begin{equation} \lb{1.16}
\abs{K_n(x,y)}^2 \leq K_n(x,x) K_n(y,y)
\end{equation}
and the reproducing property,
\begin{equation} \lb{1.17}
\int K_n(x,y) K_n(y,z)\, d\mu(y) =K_n(x,z).
\end{equation}

It is a theorem (see Simon \cite{weak-CD}) that if the DOS exists, then
\begin{equation} \lb{1.18}
\f{1}{n+1}\, K_n(x,x)\, d\mu(x) \overset{\text {weak}}{\longrightarrow} d\nu_\infty(x)
\end{equation}
and, in general, $\f{1}{n+1} K_n(x,x)\, d\mu(x)$ has the same weak limit points as $d\nu_n$. This suggests
that a.c.\ parts converge pointwise, that is, one hopes that for a.e.\ $x_0\in\Sigma_\ac$,  
\begin{equation} \lb{1.19}
\f{1}{n+1}\, K_n(x_0,x_0) \to \f{\rho_\infty(x_0)}{w(x_0)}
\end{equation}
This has been proven for regular (in the sense of Stahl--Totik \cite{StT}; see also Simon \cite{EqMC})
measures with a local Szeg\H{o} condition in a series of papers of which the seminal ones are
M\'at\'e--Nevai-Totik \cite{MNT91} and Totik \cite{Tot}. We will prove it for ergodic Jacobi matrices.

We say {\it bulk universality\/} holds at $x_0\in\supp(d\mu)$ if and only if uniformly for $a,b$ in
compact subsets of $\bbR$, we have
\begin{equation} \lb{1.20}
\f{K_n(x_0 + \f{a}{n}, x_0 + \f{b}{n})}{K_n(x_0,x_0)} \to \f{\sin(\pi\rho(x_0)(b-a))}{\pi\rho(x_0)(b-a)}
\end{equation}
We use the term ``bulk'' here because \eqref{1.20} fails at edges of the spectrum; see Lubinsky
\cite{Lub2008}. We also note that when \eqref{1.20} holds, typically (and in all cases below) for $z,w$
complex, one has
\begin{equation} \lb{1.21}
\f{K_n(x_0 + \f{z}{n}, x_0 + \f{w}{n})}{K_n(x_0,x_0)} \to
\f{\sin(\rho(x_0)(w-\bar z))}{\rho(x_0) (w-\bar z)}
\end{equation}

Freud \cite{FrBk} proved bulk universality for measures on $[-1,1]$ with $d\mu_\s =0$ and strong
conditions on $w(x)$. Because of related results (but with variable weights) in random matrix theory,
this result was re-examined and proven in multiple interval support cases with analytic weights by
Kuijlaars--Vanlessen \cite{KV}. A significant breakthrough was made by Lubinsky \cite{Lub}, whose
contributions we return to shortly.

It is a basic result of Freud \cite{FrBk}, rediscovered by Levin (in \cite{LL}), that

\begin{theorem}[Freud--Levin Theorem]\lb{T1.1} Bulk universality at $x_0$ implies strong clock behavior at $x_0$.
\end{theorem}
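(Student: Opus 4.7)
The plan is to parameterize the zeros by $\xi_j^{(n)} := n(x_j^{(n)}(x_0)-x_0)$, so that \eqref{1.13} becomes $\xi_{k+1}^{(n)}-\xi_k^{(n)}\to 1/\rho_\infty(x_0)$ for each fixed $k$. The key algebraic input is that when $p_n\bigl(x_0^{(n)}\bigr)=0$, the Christoffel--Darboux formula \eqref{1.15} collapses to
\begin{equation*}
K_n\bigl(x_0^{(n)},x\bigr)=-a_{n+1}\,p_{n+1}\bigl(x_0^{(n)}\bigr)\,\f{p_n(x)}{x-x_0^{(n)}},
\end{equation*}
a polynomial in $x$ of degree $n-1$ whose zeros are exactly the other zeros of $p_n$. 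This identity is what converts bulk universality (a statement about the kernel) into information about zero spacings.

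Concretely, I would set $F_n(z):=K_n\bigl(x_0^{(n)},x_0+z/n\bigr)/K_n(x_0,x_0)$, a polynomial of degree $n-1$ in $z$ whose real zeros are precisely $\{\xi_j^{(n)}:j\neq 0\}$, and (modulo the boundedness step below) pass to a subsequence along which $\xi_0^{(n)}\to\xi_0^*\in\bbR$. Since the convergence in \eqref{1.20} is uniform in $(a,b)$ on compacta, this gives, with $\rho:=\rho(x_0)$,
\begin{equation*}
F_n(z)\longrightarrow G(z):=\f{\sin\bigl(\pi\rho(z-\xi_0^*)\bigr)}{\pi\rho(z-\xi_0^*)}
\end{equation*}
uniformly for real $z$ on compact intervals. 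The limit $G$ has simple zeros at $\xi_0^*+k/\rho$, $k\in\bbZ\setminus\{0\}$, at each of which $G$ changes sign with $G'\neq 0$. A real intermediate-value / uniform-convergence argument then pins the zeros of $F_n$: for every $\veps>0$ and every such $k$, $F_n$ has a unique zero within $\veps$ of $\xi_0^*+k/\rho$ for all large $n$, and no other real zeros in the corresponding interval. Because $j\mapsto\xi_j^{(n)}$ and $k\mapsto \xi_0^*+k/\rho$ are both strictly increasing, the resulting bijection preserves order, forcing $\xi_k^{(n)}\to\xi_0^*+k/\rho$. Consequently $n(x_{k+1}^{(n)}-x_k^{(n)})\to 1/\rho$, which is \eqref{1.13} once one identifies $\rho(x_0)$ with $\rho_\infty(x_0)$ via the weak convergence \eqref{1.18}.

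The main obstacle is the tacit boundedness of $\xi_0^{(n)}=n(x_0^{(n)}-x_0)$: a priori $p_n$ might have an anomalously long zero-free interval straddling $x_0$, in which case the subsequential limit $\xi_0^*$ would fail to exist in $\bbR$ and the matching above would not even start. I would rule this out by a soft counting argument using Gauss--Jacobi quadrature with Christoffel weights $1/K_{n-1}(x_j^{(n)},x_j^{(n)})$: the diagonal ($a=b$) case of \eqref{1.20} forces these weights to be asymptotically $\sim 1/K_n(x_0,x_0)$ inside the window $[x_0-L/n,x_0+L/n]$, while applying the quadrature identity to a fixed polynomial of degree $\leq 2n-1$ approximating the indicator of that window (and using \eqref{1.18} to evaluate the integral) pins the number of zeros therein to roughly $2L\rho(x_0)$. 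In particular, at least one zero sits within $O(1/n)$ of $x_0$, which delivers the required bound on $\xi_0^{(n)}$ and closes the argument.
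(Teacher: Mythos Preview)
Your core argument---using the CD formula \eqref{1.15} to recognize the remaining zeros of $p_n$ as the zeros of $y\mapsto K_n\bigl(x_0^{(n)},y\bigr)$, and then matching those to the zeros of the sinc limit via uniform convergence and a sign-change/Hurwitz argument---is exactly the mechanism the paper indicates in the remark following Theorem~\ref{T1.1}. The paper gives no proof of its own here, only that one-sentence outline and citations to \cite{FrBk,LL,CD}; your write-up is a correct fleshing-out of it, including the subsequence bookkeeping and the order-preserving identification of $\xi_k^{(n)}$ with $\xi_0^*+k/\rho$.

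The one genuine gap is the boundedness step for $\xi_0^{(n)}$. As written it does not work: a \emph{fixed} polynomial cannot approximate the indicator of the \emph{shrinking} window $[x_0-L/n,\,x_0+L/n]$, and \eqref{1.18} concerns the measures $\tfrac{1}{n+1}K_n(x,x)\,d\mu(x)$, not $\int P\,d\mu$ for your test polynomial, so neither ingredient does the job you assign it. There is, however, a much shorter route already contained in the universality hypothesis. Take $a=0$ in \eqref{1.20}: the map $b\mapsto K_n(x_0,x_0+b/n)/K_n(x_0,x_0)$ converges uniformly on compacta to $\sin(\pi\rho b)/(\pi\rho b)$, which changes sign at each $b=k/\rho$, $k\neq 0$. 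Hence $K_n(x_0,\cdot)$ has a zero within $o(1/n)$ of $x_0\pm 1/(n\rho)$. By \eqref{1.15}, the zeros of $K_n(x_0,\cdot)$ together with the point $x_0$ are precisely the $n+1$ zeros of $p_{n+1}(x_0)p_n(y)-p_n(x_0)p_{n+1}(y)$, and these strictly interlace the zeros of $p_n$. That traps a zero of $p_n$ strictly between $x_0$ and $x_0+(1+o(1))/(n\rho)$, and another strictly between $x_0-(1+o(1))/(n\rho)$ and $x_0$, so both $\xi_0^{(n)}$ and $\xi_{-1}^{(n)}$ are bounded. Your matching argument then runs without obstruction.
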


\begin{remarks} 1. The proof (see \cite{FrBk,LL,CD}) relies on the CD formula, \eqref{1.15}, which
implies that if $y_0$ is a zero of $p_n$, then the other zeros of $p_n$ are the points $y$ solving
$K_n(y,y_0)=0$ and the fact that the zeros of $\sin (\pi\rho(x_0)(b-a))$ are at $b-a = j/\rho(x_0)$
with $j\in\bbZ$.

\smallskip
2. Szeg\H{o} \cite{SzBk} proved strong clock behavior for Jacobi polynomials and Erd\H{o}s--Tur\'an \cite{ET}
for a more general class of measures on $[-1,1]$. Simon \cite{Fine1,Fine2,Fine3,Fine4} has a series on
the subject. The paper with Last \cite{Fine4} was one motivation for Levin--Lubinsky \cite{LL}.

\smallskip
3. Lubinsky (private communication) has emphasized to us that this part of \cite{LL} is due to Levin
alone---hence our name for the result.
\end{remarks}

It is also useful to define
\begin{equation} \lb{1.22}
\rho_n = \f{1}{n}\, w(x_0) K_n(x_0,x_0)
\end{equation}
so \eqref{1.19} is equivalent to
\begin{equation} \lb{1.23}
\rho_n \to \rho_\infty (x_0)
\end{equation}
We say {\it weak bulk universality} holds at $x_0$ if and only if, uniformly for $a,b$ on compact subsets of
$\bbR$, we have
\begin{equation} \lb{1.24}
\f{K_n(x_0 + \f{a}{n\rho_n}, x_0 + \f{b}{n\rho_n})}{K_n(x_0,x_0)} \to
\f{\sin(\pi (b-a))}{\pi(b-a)}
\end{equation}
the form in which universality is often written, especially in the random matrix literature. Notice that
\begin{equation} \lb{1.25}
\text{weak universality} + \text{\eqref{1.23}} \Rightarrow \text{universality}
\end{equation}
Notice also that \eqref{1.24} could hold in case where $\rho_n$ does not converge as
$n\to\infty$. The same proof that verifies Theorem~\ref{T1.1} implies

\begin{theorem}[Weak Freud--Levin Theorem] \lb{T1.2} Weak bulk universality at $x_0$ implies quasi-clock
behavior at $x_0$.
\end{theorem}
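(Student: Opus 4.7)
Fix $x_0$ and abbreviate $y_j^{(n)} = x_j^{(n)}(x_0)$ and $t_j^{(n)} = n\rho_n(y_j^{(n)} - x_0)$. Since
\[
\frac{y_{j+1}^{(n)} - y_j^{(n)}}{y_1^{(n)} - y_0^{(n)}} = \frac{t_{j+1}^{(n)} - t_j^{(n)}}{t_1^{(n)} - t_0^{(n)}},
\]
the plan is to show that $t_{j+1}^{(n)} - t_j^{(n)} \to 1$ for every fixed $j \in \bbZ$, which immediately implies quasi-clock behavior at $x_0$.

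The first step is to establish that, for each fixed $j$, the sequence $t_j^{(n)}$ is bounded in $n$. The confluent form of \eqref{1.15} yields $(p_{n+1}/p_n)'(y) = K_n(y,y)/(a_{n+1}\, p_n(y)^2) > 0$, so $p_{n+1}/p_n$ is strictly increasing (and sweeps all of $\bbR$) on each interval between consecutive zeros of $p_n$. Consequently, for any constant $c$, the equation $p_{n+1}(y)/p_n(y) = c$ has exactly one solution strictly between consecutive zeros of $p_n$. Taking $c = p_{n+1}(x_0)/p_n(x_0)$ and using \eqref{1.15} shows the zeros of $y \mapsto K_n(x_0,y)$ (other than the removable point $x_0$) strictly interlace with those of $p_n$, with $x_0$ itself filling the slot between $y_{-1}^{(n)}$ and $y_0^{(n)}$. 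Applying \eqref{1.24} with $a = 0$ and $b$ ranging over a compact set, and noting that the limit $\sin(\pi b)/(\pi b)$ has simple sign-changing zeros exactly at $b \in \bbZ\setminus\{0\}$, a Hurwitz-type sign-change argument on real compacts shows that for each fixed $k \neq 0$ the $k$-th zero of $K_n(x_0,\cdot)$ lies at $x_0 + k/(n\rho_n) + o(1/n)$. The interlacing then traps each $y_j^{(n)}$ at distance $O(1/(n\rho_n))$ from $x_0$, which gives the desired boundedness.

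Next I would invoke the Freud--Levin mechanism with the moving base point $y_0^{(n)}$. Because $p_n(y_0^{(n)}) = 0$, the formula \eqref{1.15} collapses to
\[
K_n(y_0^{(n)}, y) = \frac{-a_{n+1}\, p_{n+1}(y_0^{(n)})\, p_n(y)}{y - y_0^{(n)}},
\]
so the zeros of $y \mapsto K_n(y_0^{(n)}, y)$ are precisely the zeros of $p_n$ other than $y_0^{(n)}$. Pass to a subsequence along which $t_0^{(n)} \to t^*$ (possible by the bound just proved). Taking $a = t_0^{(n)}$, which now lies in a fixed compact set, and $b$ in a compact set, the uniformity in \eqref{1.24} gives
\[
\frac{K_n(y_0^{(n)}, x_0 + b/(n\rho_n))}{K_n(x_0,x_0)} \longrightarrow \frac{\sin(\pi(b - t^*))}{\pi(b - t^*)}
\]
uniformly in $b$ on compact sets. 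The right side has simple sign-changing zeros exactly at $b = t^* + k$ for $k \in \bbZ \setminus \{0\}$, so uniform convergence on real compacts forces $t_k^{(n)} \to t^* + k$ for each fixed $k \neq 0$; combined with $t_0^{(n)} \to t^*$, one has $t_j^{(n)} \to t^* + j$ for every $j$ along the subsequence, and therefore $t_{j+1}^{(n)} - t_j^{(n)} \to 1$. Since the limit value $1$ is the same for every convergent subsequence, the full sequence $t_{j+1}^{(n)} - t_j^{(n)}$ converges to $1$, proving quasi-clock behavior.

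The main obstacle is the first step. Weak universality, as stated in \eqref{1.24}, anchors the base point at $x_0$, so the Freud--Levin trick of feeding a genuine zero of $p_n$ into the CD formula can only be invoked after one knows a priori that zeros of $p_n$ cluster near $x_0$ on the scale $1/(n\rho_n)$. The interlacing between the zeros of $p_n$ and those of $K_n(x_0,\cdot)$, driven solely by the monotonicity of $p_{n+1}/p_n$, is what makes this a priori bound accessible from \eqref{1.24} alone.
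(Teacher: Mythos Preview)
Your argument is correct and is precisely the Freud--Levin mechanism the paper invokes by reference: the paper does not spell out a proof of Theorem~\ref{T1.2} beyond the remark that ``the same proof that verifies Theorem~\ref{T1.1}'' applies, pointing to \cite{FrBk,LL,CD}, and what you have written is a careful unpacking of that cited argument in the weak-universality setting. In particular, your two-step structure --- first bounding the rescaled zeros $t_j^{(n)}$ via interlacing of the zeros of $p_n$ with those of $y\mapsto K_n(x_0,y)$, then feeding the genuine zero $y_0^{(n)}$ into the CD formula and applying \eqref{1.24} with the moving base point $a=t_0^{(n)}$ --- is exactly the Freud--Levin route, and your identification of the a~priori bound as the main obstacle (since \eqref{1.24} anchors the scaling at $x_0$) is on the mark.

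One small point worth tightening: weak universality in \eqref{1.24} is only asserted for real $a,b$, so a genuine Hurwitz theorem is not available; your ``Hurwitz-type sign-change argument on real compacts'' needs to track signs to match the $k$-th zero of the approximant with the $k$-th zero of the sinc limit. This is routine --- simplicity of the zeros of $p_{n+1}-cp_n$ gives sign changes, and uniform convergence plus the alternating sign pattern of $\sin(\pi(b-t^*))/(\pi(b-t^*))$ pins each $t_j^{(n)}$ to the correct integer shift --- but it is worth saying explicitly rather than invoking Hurwitz by name.
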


With this background in place, we can turn to describing the main results of this paper: five theorems, proven
one per section in Sections~\ref{s2}--\ref{s6}.

The first theorem is an abstraction, extension, and simplification of Lubinsky's second approach to universality
\cite{Lub-jdam}. In \cite{Lub}, Lubinsky found a beautiful way of going from control of the diagonal CD kernel
to the off-diagonal (i.e., to universality). It depended on the ability to control limits not only of $\f{1}{n}
K_n(x_0,x_0)$ but also $\f{1}{n} K_n(x_0 + \f{a}{n}, x_0 +\f{a}{n})$---what we call the Lubinsky wiggle.
We will especially care about the {\it Lubinsky wiggle condition}:
\begin{equation} \lb{1.26}
\lim_{n\to\infty}\, \f{K_n(x_0 + \f{a}{n}, x_0 + \f{a}{n})}{K_n(x_0,x_0)} =1
\end{equation}
uniformly for $a\in [-A,A]$ for each $A$. In addition to this, in \cite{Lub}, Lubinsky needed a simple but
clever inequality and, most significantly, a comparison model example where one knows universality holds.
For $[-1,1]$, he took Legendre polynomials (i.e., $d\mu = \f12 \chi_{[-1,1]}(x)\,dx$). In extending this to
more general sets, one uses approximation by finite gap sets as pioneered by Totik \cite{Tot-acta}. Simon
\cite{2exts} then used Jacobi matrices in isospectral tori for a comparison model on these finite gap sets,
while Totik \cite{Tot-prep} used polynomials mappings and the results for $[-1,1]$.

For ergodic Jacobi matrices where $\sigma(d\mu)$ is often a Cantor set, it is hard to find comparison
models, so we will rely on a second approach developed by Lubinsky \cite{Lub-jdam} that seems to be able to
handle any situation that his first approach can and which does not rely on a comparison model. Our first
theorem, proven in Section~\ref{s2}, is a variant of this approach. We need a preliminary definition:

\begin{definition} Let $d\mu$ be given by \eqref{1.5}. A point $x_0$ is called a {\it Lebesgue point} of
$d\mu$ if and only if $w(x_0) >0$ and
\begin{align}
\lim_{\delta\downarrow 0}\, (2\delta)^{-1} \int_{x_0-\delta}^{x_0+\delta} \abs{w(x)-w(x_0)}\, dx &=0 \lb{1.27} \\
\lim_{\delta\downarrow 0}\, (2\delta)^{-1} \mu_\s (x_0 -\delta, x_0 + \delta) &= 0 \lb{1.28}
\end{align}
\end{definition}

Standard maximal function methods (see Rudin \cite{Rudin}) show Lebesgue a.e.\ $x_0\in\Sigma_\ac (d\mu)$ is a
Lebesgue point.

\begin{t1} Let $x_0$ be a Lebesgue point of $\mu$. Suppose that
\begin{SL}
\item[{\rm{(1)}}] The Lubinsky wiggle condition \eqref{1.26} holds uniformly for $a\in [-A,A]$ and any
$A<\infty$.

\item[{\rm{(2)}}] We have
\begin{equation} \lb{1.29}
\liminf_{n\to\infty}\, \f{1}{n+1}\, K_n (x_0,x_0) >0
\end{equation}

\item[{\rm{(3)}}] For any $\veps$, there is $C_\veps >0$ so that for any $R<\infty$, there is an $N$ so
that for all $n>N$ and all $z\in\bbC$ with $\abs{z}<R$, we have
\begin{equation} \lb{1.30}
\f{1}{n+1}\, K_n \biggl(x_0 + \f{z}{n}\, , x_0+ \f{z}{n}\biggr) \leq C_\veps \exp (\veps\abs{z}^2)
\end{equation}
\end{SL}
Then weak bulk universality, and so, quasi-clock behavior, holds at $x_0$.
\end{t1}

\begin{remarks} 1. If one replaces \eqref{1.30} by
\begin{equation} \lb{1.31}
C \exp (A\abs{z})
\end{equation}
then the result can be proven by following Lubinsky's argument in \cite{Lub-jdam}. He does not assume \eqref{1.31}
directly but rather hypotheses that he shows imply it (but which is invalid in case $\supp(d\mu)$ is a Cantor
set).

\smallskip
2. Because our Theorem~3 below is so general, we doubt there are examples where \eqref{1.30} holds but \eqref{1.31}
does not, but we feel our more general abstract result is clarifying.

\smallskip
3. The strategy we follow is Lubinsky's, but the tactics differ and, we feel, are more elementary and illuminating.
\end{remarks}

In \cite{Lub-jdam}, the only examples where Lubinsky can verify his wiggle condition are the situations where Totik
\cite{Tot-prep} proves universality using Lubinsky's first method. To go beyond that, we need the following, proven
in Section~\ref{s3}:

\begin{t2} Let $\Sigma\subset\Sigma_\ac$. Suppose for a.e.\ $x_0\in\Sigma$, we have that condition~{\rm{(3)}}
of Theorem~1 holds and that
\begin{SL}
\item[{\rm{(4)}}] $\lim_{n\to\infty} \f{1}{n+1} K_n(x_0,x_0)$ exists and is strictly positive.
\end{SL}
Then condition {\rm{(1)}} of Theorem~1 holds for a.e.\ $x_0\in\Sigma$.
\end{t2}

Of course, (4) implies condition (2). So we obtain:

\begin{corollary}\lb{L1.3} If {\rm{(3)}} and {\rm{(4)}} hold for a.e.\ $x_0\in\Sigma$, then for a.e. $x_0\in\Sigma$,
we have weak universality and quasi-clock behavior.
\end{corollary}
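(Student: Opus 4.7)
The plan is essentially to combine Theorem~1 (which converts control of the diagonal CD kernel plus the Lubinsky wiggle condition into weak bulk universality) with Theorem~2 (which derives the wiggle condition from (3) and (4)), and then invoke the weak Freud--Levin theorem (Theorem~\ref{T1.2}) to pass from weak bulk universality to quasi-clock behavior. So the corollary is a three-result concatenation with no new substantive ingredient.

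Concretely, I would fix a full-measure subset $\Sigma' \subset \Sigma$ on which both (3) and (4) hold. I would then shrink $\Sigma'$ by a null set so that every $x_0 \in \Sigma'$ is also a Lebesgue point of $d\mu$; this is permissible because, as noted immediately before Theorem~1, standard maximal function arguments show Lebesgue a.e.\ point of $\Sigma_\ac$ is a Lebesgue point. On this $\Sigma'$, hypothesis~(2) of Theorem~1---the liminf bound \eqref{1.29}---is immediate from (4), since $\tfrac{1}{n+1} K_n(x_0,x_0)$ converges to a strictly positive number. Hypothesis~(3) of Theorem~1 is, by assumption, the condition (3) in the corollary. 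For hypothesis~(1), the Lubinsky wiggle condition, I would apply Theorem~2 directly, whose conclusion is precisely that (3) and (4) force the wiggle condition to hold a.e.\ on $\Sigma$; after one more removal of a null set, all three hypotheses of Theorem~1 are satisfied at every point of $\Sigma'$.

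Applying Theorem~1 pointwise on $\Sigma'$ yields weak bulk universality at each such $x_0$. Feeding weak bulk universality into the weak Freud--Levin theorem (Theorem~\ref{T1.2}) then yields quasi-clock behavior at each $x_0 \in \Sigma'$. Since $\Sigma \setminus \Sigma'$ has Lebesgue measure zero, we conclude that weak universality and quasi-clock behavior hold for a.e.\ $x_0 \in \Sigma$, as desired.

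There is no real analytic obstacle here beyond careful bookkeeping: the deep work is already contained in Theorems~1 and 2 and in the Freud--Levin type theorem. The only point requiring attention is ensuring that the four a.e.\ statements in play---Lebesgue point, (3), (4), and the a.e.\ conclusion of Theorem~2---can be intersected into a single full-measure set in $\Sigma$, which is automatic from countable additivity of null sets.
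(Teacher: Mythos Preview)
Your proposal is correct and matches the paper's approach exactly: the paper treats this corollary as immediate, noting only that ``(4) implies condition (2)'' and then combining Theorem~2 (which supplies condition~(1)) with Theorem~1 (whose conclusion already packages weak bulk universality together with quasi-clock behavior via Theorem~\ref{T1.2}). Your careful bookkeeping of the null sets is more explicit than the paper's one-line justification, but the logical content is identical.
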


By \eqref{1.25}, we see

\begin{corollary}\lb{C1.4} If {\rm{(3)}} and {\rm{(4)}} hold for a.e.\ $x_0\in\Sigma$, and if the DOS exists
and the limit in {\rm{(4)}} is $\rho_\infty(x)/w(x)$, then for a.e.\ $x\in \Sigma$, we have universality and
strong clock behavior.
\end{corollary}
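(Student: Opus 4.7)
The plan is to assemble Corollary~\ref{C1.4} directly from the earlier theorems in this introduction, without any new analysis. First, I would apply Corollary~\ref{L1.3} to the set $\Sigma$: the hypotheses (3) and (4) (for a.e.\ $x_0 \in \Sigma$) are exactly what Corollary~\ref{L1.3} requires, and the positivity in (4) supplies condition~(2) of Theorem~1 along the way. This yields weak bulk universality, in the sense of \eqref{1.24}, together with quasi-clock behavior at a.e.\ $x_0 \in \Sigma$.

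Second, I would promote weak universality to full bulk universality. The extra hypothesis of the corollary is that the limit in (4) equals $\rho_\infty(x_0)/w(x_0)$. By the definition \eqref{1.22} of $\rho_n$, this is precisely $\rho_n \to \rho_\infty(x_0)$, i.e.\ \eqref{1.23}. The implication \eqref{1.25}---that weak universality together with \eqref{1.23} implies bulk universality---then upgrades \eqref{1.24} to \eqref{1.20} at a.e.\ $x_0 \in \Sigma$.

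Finally, having bulk universality at a.e.\ $x_0 \in \Sigma$, the Freud--Levin theorem (Theorem~\ref{T1.1}) immediately gives strong clock behavior at the same points; the existence of the DOS, assumed in the corollary, makes the right-hand side $1/\rho_\infty(x_0)$ of \eqref{1.13} well-defined. The only bookkeeping is combining the various "a.e." exceptional sets arising from hypotheses (3) and (4) and from the conclusions of Corollary~\ref{L1.3}, but a countable union of null sets is null, so nothing is lost. There is no genuine obstacle here: the corollary is a direct repackaging of Corollary~\ref{L1.3}, the implication \eqref{1.25}, and Theorem~\ref{T1.1}.
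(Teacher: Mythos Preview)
Your proposal is correct and follows exactly the paper's approach: the paper's entire proof is the one-line remark ``By \eqref{1.25}, we see,'' which encapsulates precisely the chain Corollary~\ref{L1.3} $\Rightarrow$ weak universality, then \eqref{1.25} with \eqref{1.23} $\Rightarrow$ universality, then Theorem~\ref{T1.1} $\Rightarrow$ strong clock behavior that you have spelled out.
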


Next, we need to examine when \eqref{1.30} holds. We will not only obtain a bound of the type \eqref{1.31} but
one that does not need to vary $N$ with $R$ and is universal in $z$. We will use transfer matrix techniques
and notation.

Given Jacobi parameters, $\{a_n,b_n\}_{n=1}^\infty$, we define
\begin{equation} \lb{1.32}
A_j(z) = \begin{pmatrix}
\f{z-b_j}{a_j} & - \f{1}{a_j} \\
a_j & 0
\end{pmatrix}
\end{equation}
so that \eqref{1.2} is equivalent to
\begin{equation} \lb{1.33}
\begin{pmatrix}
p_n(x) \\ a_n p_{n-1} (x)
\end{pmatrix}
= A_n(x)
\begin{pmatrix}
p_{n-1}(x) \\ a_{n-1} p_{n-2}(x)
\end{pmatrix}
\end{equation}
We normalize, placing $a_n$ on the lower component, so that
\begin{equation} \lb{1.34}
\det(A_j(z)) =1
\end{equation}

The transfer matrix is then defined by
\begin{equation} \lb{1.35}
T_n(z) = A_n(z) \dots A_1(z)
\end{equation}
so
\begin{equation} \lb{1.36}
\begin{pmatrix} p_n(x) \\ a_n p_{n-1}(x)
\end{pmatrix}
=T_n(x)
\begin{pmatrix} 1 \\ 0
\end{pmatrix}
\end{equation}
If $\ti p_n$ are the OPRL associated to the once stripped Jacobi parameters $\{a_{n+1}, b_{n+1}\}_{n=1}^\infty$,
and
\begin{equation} \lb{1.37}
q_n(x) = -a_1^{-1} \ti p_{n-1}(x)
\end{equation}
with $q_0=0$, then
\begin{equation} \lb{1.38}
T_n(z) =
\begin{pmatrix}
p_n(z) & q_n(z) \\
a_n p_{n-1}(z) & a_n q_{n-1}(z)
\end{pmatrix}
\end{equation}

Here is how we will establish \eqref{1.30}/\eqref{1.31}:

\begin{t3} Fix $x_0\in\bbR$. Suppose that
\begin{equation} \lb{1.39}
\sup_n  \f{1}{n+1} \sum_{j=0}^n\, \norm{T_j(x_0)}^2 \leq C <\infty
\end{equation}
Then for all $z\in\bbC$ and all $n$,
\begin{equation} \lb{1.40}
\f{1}{n+1} \sum_{j=0}^n\, \biggl\| T_j\biggl(x_0 + \f{z}{n+1}\biggr)\biggr\|^2 \leq
C \exp (2C\alpha_-^{-1} \abs{z})
\end{equation}

Moreover, if
\begin{equation} \lb{1.41}
\sup_n\, \norm{T_n(x_0)}^2 = C <\infty
\end{equation}
then for all $z\in\bbC$ and $n$,
\begin{equation} \lb{1.42}
\biggl\| T_n \biggl( x_0 + \f{z}{n+1}\biggr)\biggr\| \leq C^{1/2} \exp (C\alpha_-^{-1} \abs{z})
\end{equation}
\end{t3}

\begin{remarks} 1. Our proof is an abstraction of ideas of Avila--Krikorian \cite{AvKr} who
only treated the ergodic case.

\smallskip
2. $\alpha_-$ is given by \eqref{1.7}.

\smallskip
3. There is a conjecture, called the Schr\"odinger conjecture (see \cite{MMG}), that says \eqref{1.41}
holds for a.e.\ $x_0\in\Sigma_\ac(d\mu)$.
\end{remarks}

Our last two theorems below are special to the ergodic situation. Let $\Omega$ be a compact metric space, $d\eta$
a probability measure on $\Omega$, and $S\colon\Omega\to\Omega$ an ergodic invertible map of $\Omega$ to
itself. Let $A,B$ be continuous real-valued functions on $\Omega$ with $\inf_\omega A(\omega) >0$. Let
\begin{equation} \lb{1.43}
\alpha_+ = \norm{A}_\infty \qquad
\beta =\norm{B}_\infty \qquad
\alpha_- = \norm{A^{-1}}_\infty^{-1}
\end{equation}
For each $\omega\in\Omega$, $J_\omega$ is the Jacobi matrix with
\begin{equation} \lb{1.44}
a_n(\omega) = A(S^{n-1}\omega) \qquad
b_n(\omega) = B(S^{n-1}\omega)
\end{equation}
\eqref{1.43} is consistent with \eqref{1.4} and \eqref{1.7}. Usually one only takes $\Omega$, a measure
space, and $A,B$ bounded measurable functions, but by replacing $\Omega$ by $([\alpha_-,\alpha_+]\times
[-\beta,\beta])^\infty\equiv \wti\Omega$ and mapping $\Omega\to\wti\Omega$ by $\omega\mapsto (A(S^n\omega),
B(S^n\omega))_{n=-\infty}^\infty$, we get a compact space model equivalent to the original measure model.
We use $d\mu_\omega$ for the spectral measure of $J_\omega$ and $p_n(x,\omega)$ for $p_n(x,d\mu_\omega)$.

The canonical example of the setup with a.c.\ spectrum is the almost Mathieu equation. $\alpha$ is a fixed
irrational, $\lambda$ a nonzero real, $\Omega=\partial\bbD$, the unit circle $\{e^{i\theta}\mid\theta\in
[0,2\pi)\}$
\[
a_n\equiv 1 \qquad b_n=2\lambda\cos(\pi\alpha n + \theta)
\]
(so $S(e^{i\theta})=e^{i\theta} e^{i\pi\alpha}$, $d\eta(\theta) = d\theta/2\pi$). If $0\neq\abs{\lambda} <1$,
it is known (see \cite{Av-prep1,AD,AJ,Jit07}) that the spectrum is purely a.c.\ and is a Cantor set. 
It is also known \cite{Jit07} that if $\abs{\lambda} \geq 1$, there is no a.c.\ spectrum.

We will prove the following in Section~\ref{s5}:

\begin{t4} Let $\{J_\omega\}_{\omega\in n}$ be an ergodic family with $\Sigma_\ac$, the common
essential support of the a.c.\ spectrum
of $J_\omega$, of positive Lebesgue measure. Then for a.e.\ pairs $(x,\omega)\in\Sigma_\ac\times\Omega$,
\begin{alignat}{2}
& \text{\rm{(i)}} \qquad && \lim_{n\to\infty} \f{1}{n+1} \sum_{j=0}^n \, \abs{p_j(x,w)}^2
 \text{ exists} \lb{1.45} \\
& \text{\rm{(ii)}} && \lim_{n\to\infty} \f{1}{n+1} \sum_{j=0}^n \, \abs{q_j(x,w)}^2 \text{ exists} \notag
\end{alignat}
\end{t4}

In Section~\ref{s6}, we will prove

\begin{t5} For a.e.\ $(x,\omega)$ in $\Sigma_\ac\times\Omega$, the limit in  \eqref{1.45} is
$\rho_\infty(x)/w_\omega(x)$ where $\rho_\infty$ is the density of the a.c.\ part of the DOS.
\end{t5}

\noindent {\it Note}. This is, of course, an analog of the celebrated results of M\'at\'e--Nevai--Totik
\cite{MNT91} (for $[-1,1]$) and Totik \cite{Tot} (for general sets $\fre$ containing open intervals) for
regular measures obeying a local Szeg\H{o} condition.

\smallskip

Theorems~3--5 show the applicability of Theorem~2, and so lead to

\begin{corollary}\lb{C1.5} For any ergodic Jacobi matrix, we have universality and strong clock behavior for
a.e.\ $\omega$ and a.e.\ $x_0\in\Sigma_\ac$.
\end{corollary}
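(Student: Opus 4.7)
The plan is to combine Theorems~3--5 and invoke Corollary~\ref{C1.4}; the heavy lifting has already been done, so this amounts to careful assembly. First, by Fubini applied to Theorems~4 and~5, there is a set $\Omega_0 \subset \Omega$ with $\eta(\Omega_0)=1$ such that for each $\omega \in \Omega_0$ the two Ces\`aro limits in \eqref{1.45} exist at Lebesgue-a.e.\ $x \in \Sigma_\ac$, and the $\abs{p_j}^2$-limit equals $\rho_\infty(x)/w_\omega(x)$. Fix such an $\omega$ and set $\Sigma = \Sigma_\ac(d\mu_\omega)$. Since $K_n(x,x) = \sum_{j=0}^n \abs{p_j(x,\omega)}^2$ on the real axis, condition~(4) of Theorem~2 holds a.e.\ on $\Sigma$. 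Strict positivity of the limit uses $w_\omega < \infty$ together with the standard (ergodic-theoretic) fact that $\rho_\infty > 0$ a.e.\ on $\Sigma_\ac$; without this, one would only recover the weak versions of the conclusions via Corollary~\ref{L1.3}.

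Next I verify condition~(3) of Theorem~1 at such points $x_0$. From \eqref{1.38} and $\abs{a_j} \leq \alpha_+$,
\[
\|T_j(x)\|^2 \leq (1+\alpha_+^2)\bigl(\abs{p_j(x)}^2 + \abs{p_{j-1}(x)}^2 + \abs{q_j(x)}^2 + \abs{q_{j-1}(x)}^2\bigr),
\]
so existence of both limits in \eqref{1.45} forces \eqref{1.39} at a.e.\ $x_0 \in \Sigma$. Theorem~3 then supplies the exponential bound \eqref{1.40}. Since the $p_j$ have real coefficients, $K_n(x,x) = \sum_j \abs{p_j(x)}^2$ for complex $x$ as well, and $\|T_j(x)\|^2 \geq \abs{p_j(x)}^2$ from the first column of \eqref{1.38}. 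A harmless rescaling $z \mapsto z(n+1)/n$ (which at most doubles $\abs{z}$) then yields
\[
\f{1}{n+1}\, K_n\!\Bigl(x_0 + \f{z}{n},\, x_0 + \f{z}{n}\Bigr) \leq C\exp(A\abs{z})
\]
for constants $C,A$ depending only on $x_0,\omega$. This is the stronger form \eqref{1.31}, and a fortiori the Gaussian bound \eqref{1.30} demanded by hypothesis~(3) of Theorem~1.

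With hypotheses~(3) and~(4) of Corollary~\ref{C1.4} verified at a.e.\ $x_0 \in \Sigma_\ac$ and the limit identified as $\rho_\infty(x)/w_\omega(x)$, that corollary immediately delivers universality and strong clock behavior at a.e.\ $x_0 \in \Sigma_\ac$ for every $\omega \in \Omega_0$. Since $\eta(\Omega_0)=1$, this is precisely Corollary~\ref{C1.5}. There is no serious obstacle: the only points requiring a word are (i) the Fubini step converting the ``a.e.\ pair'' statements of Theorems~4 and~5 into ``a.e.\ $\omega$, then a.e.\ $x$'' statements; (ii) the $n$ vs.\ $n+1$ rescaling in the Gaussian bound; and (iii) invoking $\rho_\infty>0$ a.e.\ on $\Sigma_\ac$ so that (4) is satisfied with a strictly positive limit and so that \eqref{1.25} upgrades ``weak'' to ``strong.''
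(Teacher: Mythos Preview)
Your proof is correct and follows exactly the route the paper intends: the paper's own ``proof'' of Corollary~\ref{C1.5} is just the sentence ``Theorems~3--5 show the applicability of Theorem~2, and so lead to'' the corollary, and you have spelled out the assembly in appropriate detail. The points you flag as needing a word---Fubini, the $n$ versus $n+1$ rescaling, and positivity of $\rho_\infty$ on $\Sigma_\ac$ (which follows from Kotani's formula \eqref{6.11} and \eqref{5.12})---are exactly the minor bookkeeping items one would want recorded.
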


In particular, the almost Mathieu equation has strong clock behavior for the zeros.

\begin{remark}

It is possible to show that for the almost Mathieu equation
there is universality for a.e. $x_0 \in \Sigma_\ac$ and {\it every}
$\omega$.  Our current approach to this uses that the Schr\"odinger
conjecture is true for the almost Mathieu operator, a recently announced
result \cite {AFK}.

\end{remark}

For $n=1,2,3,4,5$, Theorem~$n$ is proven in Section~$n+1$. Section~\ref{s7} has some further remarks.

\medskip
\noindent{\bf Acknowledgments.} A.A.\ would like to thank M.~Flach and T.~Tombrello for the hospitality
of Caltech. B.S.\ would like to thank E.~de~Shalit for the hospitality of Hebrew Universality. This
research was partially conducted during the period A.A.\ served as a Clay Research Fellow. We would
like to thank H.~Furstenberg and B.~Weiss for useful comments.

\section{Lubinsky's Second Approach} \lb{s2}

In this section, we will prove Theorem~1. We begin with two overall visions relevant to the proof. First,
the so-called ``sinc kernel'' \cite{LB}, $\sin\pi z/\pi z$ enters as the Fourier transform of a suitable
multiple of the characteristic function of $[-\pi,\pi]$.

Second, the ultimate goal of quasi-clock spacing is that on a $1/n\rho_n$ scale, zeros are a unit distance
apart, so on this scale
\begin{equation} \lb{2.1}
\# \text{ of zeros in } [0,n] \sim n
\end{equation}
Lubinsky's realization is that the Lubinsky wiggle condition and Markov--Stieltjes inequalities (see below)
imply the difference of the two sides of \eqref{2.1} is bounded by $1$. This is close enough that,
together with some complex variable magic, one gets unit spacing.

The complex variable magic is encapsulated in the following result whose proof we defer until the end of
the section:

\begin{theorem}\lb{T2.1} Let $f$ be an entire function with the following properties:
\begin{SL}
\item[{\rm{(a)}}]
\begin{equation} \lb{2.2}
 f(0)=1
\end{equation}

\item[{\rm{(b)}}]
\begin{equation} \lb{2.3}
\sup_{x\in\bbR} \, \abs{f(x)}<\infty
\end{equation}

\item[{\rm{(c)}}]
\begin{equation} \lb{2.4}
 \int_{-\infty}^\infty \abs{f(x)}^2 \, dx \leq 1
\end{equation}

\item[{\rm{(d)}}] $f$ is real on $\bbR$.

\item[{\rm{(e)}}] All the zeros of $f$ lie on $\bbR$ and if these zeros are labelled by
\begin{equation} \lb{2.5}
\ldots \leq z_{-2} \leq z_{-1} < 0 < z_1 \leq z_2 \leq \dots
\end{equation}
with $z_0\equiv 0$, then
\begin{equation} \lb{2.6}
\abs{z_j-z_k} \geq \abs{j-k} -1
\end{equation}

\item[{\rm{(f)}}] For each $\veps >0$, there is $C_\veps$ with
\begin{equation} \lb{2.7}
\abs{f(z)} \leq C_\veps e^{\veps\abs{z}^2}
\end{equation}
\end{SL}
Then
\begin{equation} \lb{2.8}
f(z) = \f{\sin (\pi z)}{\pi z}
\end{equation}
\end{theorem}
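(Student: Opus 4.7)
My plan is to identify $f$ as the reproducing kernel at $0$ in the Paley--Wiener space $PW_\pi$ of entire functions of exponential type $\leq \pi$ with $L^2$ trace on $\mathbb{R}$, and then read off the claimed identity from a saturated Cauchy--Schwarz inequality.

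First I would build a Hadamard factorization of $f$. Applying (e) with $k=0$ gives $|z_k| \geq |k|-1$ for all $k\neq 0$, so the zero set has exponent of convergence at most $1$ and $\sum_{k\neq 0}|z_k|^{-2} < \infty$. Together with the order-$\leq 2$ bound from (f), Hadamard's theorem (using $f(0)=1$) produces
\[
f(z) = e^{\alpha z^2 + \beta z}\prod_{k \neq 0}\left(1 - \frac{z}{z_k}\right)e^{z/z_k},
\]
where the canonical product has order at most $1$. If $\alpha \neq 0$, then $|e^{\alpha z^2}|$ grows like $e^{|\alpha|r^2}$ on some ray, violating the minimal-type clause of (f); hence $\alpha=0$ and $f$ has order $\leq 1$. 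The canonical product is real on $\mathbb{R}$, so (d) forces $\beta \in \mathbb{R}$.

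Next I would bound the exponential type. Since $z_k$ and $\beta$ are real, $|e^{iy/z_k}|=1$ and
\[
\log|f(iy)| = \tfrac{1}{2}\sum_{k\neq 0}\log\!\left(1+\frac{y^2}{z_k^2}\right).
\]
Bounding term by term via $|z_k|\geq|k|-1$ and using $\prod_{j\geq 1}(1+y^2/j^2)=\sinh(\pi y)/(\pi y)$ gives $\log|f(iy)|\leq \pi y + O(\log y)$. The indicator $h(\theta)=\limsup_{r\to\infty} r^{-1}\log|f(re^{i\theta})|$ therefore satisfies $h(0)=h(\pi)=0$ (by (b)) and $h(\pm\pi/2)\leq \pi$. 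Trigonometric convexity of the indicator of an order-$1$ entire function then yields $h(\theta)\leq \pi|\sin\theta|$ everywhere, so $f$ has exponential type at most $\pi$. By (c), $f\in L^2(\mathbb{R})$ with $\|f\|_{L^2}\leq 1$; combined with the type bound, Paley--Wiener puts $f\in PW_\pi$. The reproducing kernel at $0$ there is $k_0(z)=\sin(\pi z)/(\pi z)$, with $k_0(0)=\|k_0\|_{L^2}=1$, so the reproducing property and Cauchy--Schwarz give
\[
1 = f(0) = \langle f, k_0\rangle \leq \|f\|_{L^2}\,\|k_0\|_{L^2} \leq 1,
\]
forcing equality. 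Hence $f$ is a scalar multiple of $k_0$, and $f(0)=k_0(0)=1$ pins down $f(z)=\sin(\pi z)/(\pi z)$.

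The main obstacle is the type-$\pi$ estimate in the middle step. The spacing condition (e) is one-sided—zeros may be considerably sparser than the integers—so the direct bound on $\log|f(iy)|$ gives the right constant only on the imaginary axis. Promoting this to the correct exponential type $\pi$ in every direction (rather than some larger constant absorbing contributions of $\beta$ or of slanted rays) is what requires the trigonometric-convexity machinery for the indicator function. The rest—verifying the reproducing-kernel identity and applying Cauchy--Schwarz—is then routine.
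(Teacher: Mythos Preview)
Your argument is correct and reaches the same endgame as the paper---once exponential type $\pi$ is in hand, Paley--Wiener plus the equality case of Cauchy--Schwarz against the reproducing kernel $\sin(\pi z)/(\pi z)$ forces the conclusion (this is exactly the paper's Proposition~2.2).  The genuine difference is in how the type-$\pi$ bound is obtained.  The paper never reduces to the imaginary axis: it writes the Hadamard product and, for each real $x$, chooses $\delta(x)\in[0,1]$ staying $1/10$ away from every zero, then bounds the ratio $|f(x+iy)|^2/|f(x+\delta)f(x-\delta)|$ factor by factor using the full spacing hypothesis $|z_j-z_k|\ge|j-k|-1$.  This gives $|f(x+iy)|\le C_\veps e^{(\pi+\veps)|y|}$ directly for every $x$, with no appeal to Phragm\'en--Lindel\"of or indicator theory.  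Your route instead extracts only the corollary $|z_j|\ge|j|-1$, bounds $|f(iy)|$ on the imaginary axis alone, and then transports the bound to all directions via Phragm\'en--Lindel\"of in each quadrant.  So you use less of hypothesis~(e) but more external machinery; the paper's estimate is more self-contained but needs the full two-point spacing.  (The paper even notes in a remark that your style of argument goes through under the weaker zero condition, provided one already knows order strictly below $2$.)

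Two small points worth tightening.  First, the step ``$\alpha\neq0$ violates~(f)'' is not quite one line: since $\alpha$ is real (from~(d)), the direction of maximal growth of $e^{\alpha z^2}$ is the real axis when $\alpha>0$, where the canonical product has zeros and could in principle be very small.  The clean justification is the minimum modulus theorem for canonical products of order $\le1$: along a sequence $r_n\to\infty$ one has $\min_{|z|=r_n}|P(z)|\ge e^{-r_n^{1+\epsilon}}$, whence $M(r_n,f)\ge e^{|\alpha|r_n^2-r_n^{1+\epsilon}}$, contradicting minimal type at order $2$.  Second, invoking ``trigonometric convexity of the indicator'' presupposes finite exponential type, which you have not yet established at that point.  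It is cleaner (and equivalent) to apply Phragm\'en--Lindel\"of directly to $e^{i\pi z}f(z)$ in each quadrant, which needs only order $<2$---and that you do have once $\alpha=0$.
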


\begin{remarks} 1. \eqref{2.6} allows $f$ a priori to have double zeros but not triple or higher zeros.

\smallskip
2. It is easy to see there are examples where \eqref{2.7} holds for some but not all of $\veps$ and
where \eqref{2.8} is false, so \eqref{2.7} is sharp.
\end{remarks}

\begin{proof}[Proof of Theorem~2 given Theorem~\ref{T2.1}] (This part of the argument is essentially in
Lubinsky \cite{Lub-jdam}.) Fix $a\in\bbR$ and let
\begin{equation} \lb{2.9}
f_n(z) = \f{K_n(x_0 + \f{a}{n\rho_n}, x_0 + \f{a+z}{n\rho_n})}{K_n(x_0, x_0)}
\end{equation}
By \eqref{1.29}, \eqref{1.30}, and \eqref{1.16}, the $f_n$ are uniformly bounded on each disk $\{z\mid\abs{z}<R\}$, so
by Montel's theorem, we have compactness that shows it suffices to prove that any limit point $f(z)$ has
the form \eqref{2.8}. We will show that this putative limit point obeys conditions (a)--(f) of
Theorem~\ref{T2.1}.

By the Lubinsky wiggle condition \eqref{1.26}, (a) holds. By Schwarz inequality, \eqref{1.11} and the
wiggle condition,
\begin{equation} \lb{2.10}
\sup_{x\in\bbR}\, \abs{f(x)} =1
\end{equation}
which is stronger than (b).

By \eqref{1.17},
\begin{equation} \lb{2.12}
\int_{\abs{y-x_0-\f{a}{n\rho_n}} \leq \f{R}{n\rho_n}} \abs{K_n(x,y)}^2 w(y)\, dy \leq K_n(x,x)
\end{equation}
for each $R<\infty$. Changing variables and using the Lebesgue point condition leads to
\begin{equation} \lb{2.13}
\int_{-R}^R \abs{f(y)}^2\, dy \leq 1
\end{equation}
which yields \eqref{2.4} (see Lubinsky \cite{Lub-jdam} for
more details).
In this, one uses \eqref{1.29}
and \eqref{1.30} to see that
\begin{equation} \lb{2.14x}
0 < \inf \rho_n < \sup \rho_n <\infty.
\end{equation}

That $f$ is real on $\bbR$ is immediate; the reality of zeros follows from Hurwitz's theorem and the fact
(see, e.g., \cite{CD}) that $p_{n+1}(x) -cp_n(x)$ has only real zeros for $c$ real.

The Markov--Stieltjes inequalities (see \cite{Markov,FrBk,CD}) assert that if $x_1,x_2, \dots$
are successive zeros of $p_n(x)-cp_{n-1}(x)$ for some $c$, then for $j\geq k+2$,
\begin{equation} \lb{2.14}
\mu([x_j,x_k]) \geq \sum_{\ell=k+1}^{j-1} \f{1}{K_n(x_\ell,x_\ell)}
\end{equation}
Using the fact that the $z_j$ (including $z_0$) are, by Hurwitz's theorem, limits of $x_j$'s scaled
by $n\rho_n$ and the Lubinsky wiggle condition to control limits of $n\rho_n/K_n(x_\ell,x_\ell)$, one
finds (see Lubinsky \cite{Lub-jdam} for more details)
that \eqref{2.6} holds. Here one uses that $x_0$
is a Lebesgue point to be sure that
\begin{equation} \lb{2.15}
\f{1}{x_k -x_j} \int_{x_j}^{x_k} d\mu(y)\to w(x_0)
\end{equation}

Finally, \eqref{1.30} implies \eqref{2.7}. Thus, \eqref{2.8} holds.
\end{proof}

The following will reduce the proof of Theorem~\ref{T2.1} to using conditions (a)--(e) to improving
the bound \eqref{2.7}.

\begin{proposition}\lb{P2.2} \begin{SL}
\item[{\rm{(a)}}] Fix $a>0$. If $f$ is measurable, real-valued and
supported on $[-a,a]$ with
\begin{equation} \lb{2.16}
\int_{-a}^a f(x)^2\, dx \leq 2a \quad\text{and}\quad \int_{-a}^a f(x)\, dx = 2a
\end{equation}
then
\begin{equation} \lb{2.17}
f(x) =\chi_{[-a,a]} (x) \quad\hbox{a.e.}
\end{equation}

\item[{\rm{(b)}}] If $f$ is real-valued and continuous on $\bbR$ and $\widehat f$ is supported on
$[-\pi,\pi]$ with
\begin{equation} \lb{2.18}
\int_{-\infty}^\infty f(x)^2\, dx \leq 1 \quad\text{and}\quad f(0)=1
\end{equation}
then
\begin{equation} \lb{2.19}
f(x) =\f{\sin(\pi x)}{\pi x}
\end{equation}

\item[{\rm{(c)}}] If $f$ is an entire function, real on $\bbR$ with \eqref{2.18}, and for all $\delta >0$,
there is $C_\delta$ with
\begin{equation} \lb{2.20}
\abs{f(z)}\leq C_\delta \exp ((\pi+\delta)\abs{\Ima z})
\end{equation}
then \eqref{2.8} holds.
\end{SL}
\end{proposition}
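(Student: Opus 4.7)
The plan is to treat parts (a), (b), (c) in succession, with each feeding into the next via a Fourier/Paley--Wiener reduction.

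For part (a), the key observation is that the two hypotheses conspire to force an $L^2$ identity. Expanding
\begin{equation}
\int_{-a}^a (f(x)-1)^2 \, dx = \int_{-a}^a f(x)^2 \, dx - 2\int_{-a}^a f(x) \, dx + 2a,
\end{equation}
and substituting $\int f^2 \le 2a$ and $\int f = 2a$ gives $\int_{-a}^a (f-1)^2 \, dx \le 0$. Since the integrand is nonnegative, $f=1$ a.e.\ on $[-a,a]$, which is \eqref{2.17}. This is the cleanest step; essentially no obstacle.

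For part (b), I will pass to the Fourier transform with convention $\widehat{f}(\xi) = \int f(x) e^{-i\xi x}\, dx$, so that Plancherel reads $\int |f|^2 \, dx = (2\pi)^{-1} \int |\widehat{f}|^2\, d\xi$ and Fourier inversion gives $f(0) = (2\pi)^{-1}\int \widehat{f}(\xi)\, d\xi$. The hypothesis $\int f^2 \le 1$ becomes $\int_{-\pi}^\pi |\widehat{f}|^2\, d\xi \le 2\pi$, while $f(0)=1$ becomes $\int_{-\pi}^\pi \widehat{f}(\xi)\, d\xi = 2\pi$. Since $f$ is real, $\widehat{f}(-\xi) = \overline{\widehat{f}(\xi)}$; write $\widehat{f} = g + ih$ with $g$ the even real part and $h$ the odd real part. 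Then $h$ integrates to zero over $[-\pi,\pi]$, so $\int_{-\pi}^\pi g\, d\xi = 2\pi$, and $\int_{-\pi}^\pi g^2\, d\xi \le \int_{-\pi}^\pi |\widehat{f}|^2\, d\xi \le 2\pi$. Applying part (a) with $a=\pi$ to $g$ forces $g = \chi_{[-\pi,\pi]}$ a.e.; the residual $L^2$ inequality then forces $h=0$ a.e. Inverting, $f(x) = (2\pi)^{-1}\int_{-\pi}^\pi e^{i\xi x}\, d\xi = \sin(\pi x)/(\pi x)$.

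For part (c), the point is to use the Paley--Wiener theorem to get the spectral support needed to invoke (b). Since $f$ is entire, real on $\bbR$, lies in $L^2(\bbR)$ by \eqref{2.18}, and satisfies the exponential bound \eqref{2.20} of type $\pi + \delta$ for every $\delta>0$, the Paley--Wiener theorem gives that $\widehat{f}$ is supported in $[-\pi-\delta,\pi+\delta]$ for every $\delta >0$, hence in $[-\pi,\pi]$. Part (b) then applies and yields \eqref{2.8}. The only mild care needed is continuity of $f$ on $\bbR$ (automatic since $f$ is entire) and the $L^2$ bound; the main conceptual content is really Paley--Wiener, so I expect no serious obstacle. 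The substantive work in the section is therefore all in (a), which is elementary, plus invocation of two standard tools (Plancherel and Paley--Wiener).
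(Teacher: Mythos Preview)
Your proof is correct and follows the same approach as the paper: expand the square in (a), reduce (b) to (a) via Plancherel, and reduce (c) to (b) via Paley--Wiener. The paper is terser in (b), simply applying (a) to $(2\pi)^{1/2}\widehat f$ without commenting on reality; your decomposition into even real and odd imaginary parts makes this explicit, though in fact the computation in (a) extends verbatim to complex-valued $f$ once one writes $\int \lvert f-1\rvert^2 = \int \lvert f\rvert^2 - 2\Real\int f + 2a$.
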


\begin{proof} (a) \ Essentially this follows from equality in the Schwarz inequality. More precisely,
\eqref{2.16} implies
\begin{equation} \lb{2.21}
\int_{-a}^a \abs{f(x)-\chi_{[-a,a]}(x)}^2\, dx \leq 0
\end{equation}

\smallskip
(b) \ Apply part (a) to $(2\pi)^{1/2} \widehat f(k)$ with $a=\pi$.

\smallskip
(c) \ By the Paley--Wiener theorem, \eqref{2.20} implies that $\widehat f$ is supported on $[-\pi,\pi]$.
\end{proof}

Thus, we are reduced to going from \eqref{2.7} to \eqref{2.20}.

\comm{
The key tool is the Phragm\'en--Lindel\"of
principle in the following form (see Titchmarsh \cite[Sect.~5.61]{Tit}): Let $S$ be a sector of open angle
$\pi/\alpha$, that is, for some $z_0\in\partial\bbD$,
\begin{equation} \lb{2.22}
S=\{z\mid z=z_0 re^{i\theta};\, 0 < r <\infty,\, \abs{\theta} < \pi/2\alpha\}
\end{equation}
Suppose $f$ is analytic in $S$ and continuous on $\bar S$ and obeys
\begin{equation} \lb{2.23}
\abs{f(z)} \leq C\exp (\abs{z}^\beta)
\end{equation}
for some
\begin{equation} \lb{2.24}
\beta <\alpha
\end{equation}
Then
\begin{equation} \lb{2.25x}
\sup_{z\in\bar S}\, \abs{f(z)} \leq \sup_{z\in\partial S}\, \abs{f(z)}
\end{equation}
and, in particular, if $f$ is bounded on $\partial S$, it is bounded on $S$.

As a warmup, we prove a variant of Theorem~\ref{T2.1} that suffices for most applications, which is somewhat
simpler to prove and which we will need in the proof of Theorem~\ref{T2.1}:

\begin{theorem}\lb{T2.3} Suppose {\rm{(a)}}--{\rm{(d)}} of Theorem~\ref{T2.1} hold but {\rm{(e)}} is replaced
by the weaker and {\rm{(f)}} by the stronger
\begin{SL}
\item[{\rm{(e$^\prime$)}}] $\abs{z_j} \geq \abs{j}-1$
\item[{\rm{(f$^\prime$)}}] For some $\delta <2$, there is $C$ with
\begin{equation} \lb{2.25}
\abs{f(z)} \leq Ce^{\abs{z}^\delta}
\end{equation}
\end{SL}
Then \eqref{2.8} holds.
\end{theorem}

\begin{proof}
}

By $f(0)=1$, the reality of the zeros and \eqref{2.7}, we have, by the
Hadamard factorization theorem (see Titchmarsh
\cite[Sect.~8.24]{Tit}) that
\begin{equation} \lb{2.26}
f(z) = e^{Az} \prod_{j\neq 0} \biggl( 1-\f{z}{z_j}\biggr) e^{z/z_j}
\end{equation}
with $A$ real. For $x\in\bbR$, define $z_j(x)$ to be a renumbering of the $z_j$, so
\begin{equation} \lb{2.24x}
\ldots \leq z_{-1}(x) < x \leq z_0(x) \leq z_1(x) \leq \ldots
\end{equation}
By $\abs{z_j-z_k}\geq \abs{k-j}-1$, we see that
\begin{equation} \lb{2.25y}
z_{n+1}(x) - x \geq n \qquad x-z_{-(n+1)}(x) \geq n
\end{equation}

In particular, $(x-1.1,\, x+ 1.1)$ can contain at most $z_0(x), z_{\pm 1}(x),
z_{\pm 2}(x)$. Removing the open intervals of size $2/10$ about each of the five points $\abs{z_\ell(x)-x}$
($\ell=0,\pm 1, \pm 2$) from $[0,1]$ leaves at least one $\delta >0$, that is, we can pick $\delta(x)$
in $[0,1]$ so for all $j$,
\begin{equation} \lb{2.25a}
\abs{z_j(n)  - (x\pm \delta)} \geq \tfrac{1}{10}
\end{equation}
Moreover, by \eqref{2.25y}, for $n=1,2,\dots$,
\begin{equation} \lb{2.25b}
\abs{z_{\pm(n+2)}(x) - (x\pm\delta)} \geq n
\end{equation}

Since
\begin{equation} \lb{2.25c}
\f{\abs{1-(x+iy)/z_j}^2}{\abs{(1-(x+\delta/z_j)(1-x-\delta)/z_j)}} \leq 1 +
\f{(y^2 + \delta^2)}{\abs{z_j-(x+\delta)} \abs{z_j-(x-\delta)}}
\end{equation}
we conclude from \eqref{2.26} that
\begin{align}
\f{\abs{f(x+iy)}^2}{\abs{f(x-\delta)} \abs{f(x+\delta)}}
& \leq \biggl[ 1 + \f{y^2+1}{(\f{1}{100})}\bigg]^5 \prod_{n=1}^\infty
\biggl( 1 + \f{1+y^2}{n^2}\biggr)^2   \notag \\
& \leq C(1+y^{10}) \biggl( \f{\sinh \pi\sqrt{y^2+1}}{\pi \sqrt{y^2+1}}\biggr)^2 \lb{2.25d}
\end{align}
Thus, for any $\veps$, there is a $C_\veps$ with
\begin{equation} \lb{2.25e}
\abs{f(x+iy)} \leq C_\veps \exp ((\pi+\veps) \abs{y})
\end{equation}
for every $x+iy \in \C$, which is (\ref {2.20}).  This concludes the proof
of Theorem \ref {T2.1}.

\comm{
For $x \in \R$, choose $0 \leq \delta=\delta(x) \leq 1$ such
that $|(z-x_j)^2-\delta^2| \geq \frac {1} {100}$.

Notice that for every $y \in \R$ we
have
\begin{equation}
\frac {|f(x+iy)|^2} {|f(x+\delta) f(x-\delta)|}=\prod_{j \neq 0}
\biggl( 1+\frac {\delta^2+y^2} {|(z_j-x)^2-\delta^2|} \biggr).
\end{equation}
From the condition $|z_j-z_k| \geq |k-j|-1$, it follows that we can relabel
all $z_j$, except possibly $10$ of them, as $w_j$, $j \neq 0$, with $|w_j-x|
\geq |j|$.  Thus
\begin{align}
\frac {|f(x+iy)|^2} {|f(x+\delta) f(x-\delta)|} & \leq (101+100
y^2)^{10} \prod_{j=1}^\infty \biggl (1+\frac {1+y^2} {j^2} \biggr )\\
\nonumber
&\leq
(101+100 y^2)^{10} \biggl (\frac {\sinh(\pi y)} {\pi y} \biggr )^2.
\end{align}
}

\comm{
Thus, for $\abs{y}\geq 1$ and $y$ real,
\begin{align}
\abs{f(iy)}^2 &= \prod_{j\neq 0} \biggl( 1 + \f{y^2}{z_j^2}\biggr) \lb{2.27}  \\
&\leq (1+Cy^2)^2 \prod_{j=1}^\infty \biggl( 1+\f{y^2}{j^2}\biggr)^2 \lb{2.28} \\
&= (1+Cy^2)^2 \biggl( \f{\sinh (\pi y)}{\pi y}\biggr)^2 \lb{2.29}
\end{align}
\eqref{2.28}, with $C=\max(\f{1}{z_1^2}, \f{1}{z_{-1}^2})$, follows from $\abs{z_{j+1}} \geq \abs{j}$
and then \eqref{2.29} from the Euler product for $\sin (\pi z)$.
}

\begin{remark} It is possible to show, using the Phragm\'en--Lindel\"of principle \cite{Tit}, that if
one assumes, instead of (\ref {2.7}), the stronger $|f(z)| \leq C
e^{|z|^\delta}$, then it is possible
to weaken (\ref {2.6}) to
\begin{equation} \lb{2.26x}
|z_j| \geq |j|-1
\end{equation}
for if \eqref{2.26x} holds, then \eqref{2.26} implies that
\begin{equation} \lb{2.27x}
\abs{f(iy)} \leq C(1+\abs{y}) e^{\pi\abs{y}}
\end{equation}
Applying Phragm\'en--Lindel\"of to $(1-iz)^{-1} f(z) e^{i\pi z}$ on the sectors $\arg z\in [0,\pi/2]$
and $[\pi/2,\pi]$ proves that
\begin{equation} \lb{2.28x}
\abs{f(x+iy)} \leq C (1+\abs{z}) e^{\pi\abs{y}}
\end{equation}
\end{remark}

\comm{
Define
\begin{equation} \lb{2.31}
g(z) = e^{i(\pi+\veps)z} f(z)
\end{equation}

On the sector $S\equiv \{z\mid \arg z\in (0, \pi/2)\}$ with opening angle $\pi/2$ (so $\alpha =2$ in
the language of \eqref{2.24}), we have \eqref{2.23} with $\beta=\gamma <\alpha$ and, by (b), $g$ is
bounded on $\arg z=0$ and, by \eqref{2.30} and $e^{i(\pi+\veps)(iy)}=e^{-(\pi+\veps)y}$, $g$ is bounded
on $\arg z=\pi/2$. By the Phragm\'en--Lindel\"of principle, $g$ is bounded, that is, on $S$,
\begin{equation} \lb{2.32}
\abs{f(z)} \leq Ce^{(\pi+\veps)\abs{\Ima y}}
\end{equation}

A similar argument works on the other three quadrants, so $f$ obeys \eqref{2.20}. By Proposition~\ref{2.2}(c),
\eqref{2.8} holds.
\end{proof}

\begin{lemma}\lb{L2.4}
\begin{SL}
\item[{\rm{(i)}}] Let $y_1\in [-1,1]$ and let $f$ be $C^1$ in a neighborhood of $[-1,1]$. Suppose $f(y_1)=0$.
Then for $x\in [-1,1]$,
\begin{equation} \lb{2.33}
g(x) = \f{f(x)}{x-y_1}
\end{equation}
obeys
\begin{equation} \lb{2.34}
\abs{g(x)} \leq \sup_{\abs{y}\leq 1}\, \abs{f'(y)}
\end{equation}

\item[{\rm{(ii)}}] Let $y_1,y_2\in [-1,1]$ be distinct and let $f$ be $C^2$ in a neighborhood of $[-1,1]$.
Suppose $f(y_1)=f(y_2)=0$. Then for $x\in [-1,1]$,
\begin{equation} \lb{2.35}
h(x) = \f{f(x)}{(x-y_1)(x-y_2)}
\end{equation}
obeys
\begin{equation} \lb{2.36}
\abs{h(x)} \leq \tfrac12\, \sup_{\abs{y}\leq 1}\, \abs{f''(y)}
\end{equation}
\end{SL}
\end{lemma}

\begin{remark} $g$ and $h$ are initially defined for $x\neq y_1,y_2$ but have unique continuous extensions.
\end{remark}

\begin{proof} (i) \ We have that
\begin{align}
f(x)-f(y_1) &= \int_0^1 \biggl[ \f{df}{ds}\, ((1-s)x+sy_1)\biggr]\, ds \lb{2.37} \\
&= (y_1-x) \int_0^1 f' ((1-s)x+sy_1)\, ds \lb{2.38}
\end{align}
which implies that
\begin{equation} \lb{2.39}
g(x) = -\int_0^1 f'((1-s)x+sy_1)\, ds
\end{equation}
from which we immediately get \eqref{2.34} but also
\begin{equation} \lb{2.40}
\abs{g'(x)} \leq \tfrac12\, \sup_{\abs{y}\leq 1}\, \abs{f''(y)}
\end{equation}

\smallskip
(ii) \ Clearly, if $g$ is given by \eqref{2.33}, then $g(y_2)=0$ and
\begin{equation} \lb{2.41}
h(x) = \f{g(x)}{x-y_2}
\end{equation}
so, by (i),
\begin{equation} \lb{2.42}
\abs{h(x)} \leq \sup_{\abs{y}\leq 1}\, \abs{g'(y)}
\end{equation}
and \eqref{2.40} implies \eqref{2.36}.
\end{proof}

\begin{lemma} \lb{L2.5} Suppose $f$ is analytic in a neighborhood of $\bar S$ where
\begin{equation} \lb{2.43}
S = \{z\mid \abs{\arg z} < \pi/4\}
\end{equation}
and on $\bar S$,
\begin{equation} \lb{2.44}
\abs{f(z)} \leq Ce^{\abs{z}^2}
\end{equation}
Suppose also for $x>0$,
\begin{equation} \lb{2.45}
\abs{f(x)} \leq C
\end{equation}
Then on $S$,
\begin{equation} \lb{2.46}
\abs{f(x+iy)} \leq Ce^{2x\abs{y}}
\end{equation}
\end{lemma}

\begin{proof} We will prove the result for $y>0$ and then use symmetry. Let
\begin{equation} \lb{2.46x}
S_+ = \{z\in S\mid \Ima z >0\}
\end{equation}
so $S_+$ has opening angle $\pi/4$ (i.e., $\alpha =4$). Let
\begin{equation} \lb{2.47}
g(z) = e^{iz^2} f(z)
\end{equation}

Clearly, for $x >0$,
\begin{equation} \lb{2.48}
\abs{g(x)} = \abs{f(x)} \leq C
\end{equation}
while
\begin{equation} \lb{2.49}
\abs{g(xe^{i\pi/4})} = e^{-x^2} \abs{f(xe^{i\pi/4})} \leq C
\end{equation}
so, by the Phragm\'en--Lindel\"of principle on $S_+$, $\abs{g(z)} \leq C$ on $S_+$, that is, there
\begin{equation} \lb{2.50}
\abs{f(z)} \leq Ce^{\Real (iz^2)} = Ce^{2xy}
\end{equation}
which is \eqref{2.46}.
\end{proof}

\begin{proof}[Proof of Theorem~\ref{T2.1}] By Lemma~\ref{L2.5},
\begin{equation} \lb{2.51}
x\in\bbR,\, \abs{z-x} \leq 1 \Rightarrow \abs{f(z)} \leq Ce^{2\abs{x}}
\end{equation}
Thus, for $x\in\bbR$, by a Cauchy estimate,
\begin{equation} \lb{2.52}
\abs{f'(x)} + \abs{f''(x)} \leq Ce^{2\abs{x}}
\end{equation}

Let $x_0\in\bbR$ and define $z_j(x_0)$ to be the zeros of $f(z)$ labelled by
\begin{equation} \lb{2.53}
\ldots < z_{-1} (x_0) < x_0 \leq z_1(x_0) < z_2(x_0) \leq \dots
\end{equation}
By \eqref{2.6},
\begin{equation} \lb{2.54}
\abs{z_j(x_0) -x_0} \geq \abs{j}-1
\end{equation}

Let
\begin{equation} \lb{2.55}
h(z) = \f{f(z+x_0)}{(z-z_1(x_0) + x_0)(z-z_{-1}(x_0)+x_0)}
\end{equation}
Then, by Lemma~\ref{L2.4} (if $\abs{z_1(x_0)-x_0} >1$, use $\abs{z_1(x_0)-x_0}^{-1} <1$ and use
the single zero estimate),
\begin{equation} \lb{2.56}
\abs{h(0)} \leq Ce^{2\abs{x_0}}
\end{equation}

If now $w_j$ are the zeros of $h$ (i.e., for $j>0$, $w_{\pm j}=z_{\pm(j-1)}(x_0)-x_0$), we have
\begin{equation} \lb{2.57}
\abs{w_j} \geq \abs{j}
\end{equation}
So, in particular,
\begin{equation} \lb{2.58}
\sum_{+j} \, \abs{w_j}^{-2} <\infty
\end{equation}

By the sharp form of Hadamard's theorem (see, e.g., the proof of Reed--Simon \cite[Thm.~XIII.106]{RS4}),
we have
\begin{equation} \lb{2.59}
h(z) = h(0) e^{Az} \prod_{j\neq 0} \biggl( 1-\f{z}{w_j}\biggr) e^{z/w_j}
\end{equation}
with $A$ real, so
\begin{equation} \lb{2.60}
\abs{h(iy)} \leq \abs{h(0)}\, \f{\sinh(\pi y)}{\pi y}
\end{equation}
as in the proof of \eqref{2.29}.

It follows from \eqref{2.56} and \eqref{2.60} that
\begin{equation} \lb{2.61}
\abs{f(x_0 + iy)} \leq Ce^{2\abs{x_0}} e^{\pi\abs{y}}
\end{equation}
so
\begin{equation} \lb{2.62}
\abs{f(z)} \leq Ce^{(2+\pi)\abs{z}}
\end{equation}

Thus, condition (f$^\prime$) of Theorem~\ref{T2.3} with $\gamma=1 <2$ holds, and using that theorem,
we get \eqref{2.8}.
\end{proof}
}

\section{Doing the Lubinsky Wiggle} \lb{s3}

Our goal in this section is to prove Theorem~2.

\begin{proof}[Proof of Theorem~2] By Egorov's theorem (see Rudin \cite[p.~73]{Rudin}), for every
$\veps$, there exists a compact set $\K\subset\Sigma$ with $\abs{\Sigma\setminus
\K} <\veps$ (with
$\abs{\cdot}=$ Lebesgue measure) so that on $\K$, $\f{1}{n+1} K_n(x,x)\equiv \tilde{q}_n(x)$ converges
uniformly to a limit we will call $\tilde{q}(x)$. If we prove that \eqref{1.26} holds for a.e.\ $x_0\in
\K$,
then by taking a sequence of $\veps$'s going to $0$, we get that \eqref{1.26} holds for a.e.\
$x_0\in\Sigma$

By Lebesgue's theorem on differentiability of integrals of $L^1$-functions (see Rudin \cite[Thm~7.7]{Rudin})
applied to the characteristic function of $\K$, for a.e.\ $x_0\in \K$,
\begin{equation} \lb{3.1}
\lim_{\delta\downarrow 0}\, (2\delta)^{-1} \abs{(x_0-\delta, x_0+\delta) \cap
\K} =1
\end{equation}
We will prove that \eqref{1.26} holds for all $x_0$ with \eqref{3.1} and with condition~(4).

$\f{1}{n+1} K_n(x+\f{a}{n} + \f{\bar z}{n}, x+\f{a}{n} + \f{z}{n})$ is analytic in $z$, so by a Cauchy
estimate and $a$ real,
\begin{align}
\biggl| \f{d}{da}\, \tilde{q}_n\biggl(x + \f{a}{n}\biggr)\biggr|
& \leq \sup_{\abs{z}\leq 1} \f{1}{n+1} \biggl| K_n\biggl( x+\f{a}{n} + \f{\bar z}{n}\, ,
x+\f{a}{n} + \f{z}{n}\biggr) \biggr| \notag \\
&= \sup_{\abs{z}\leq 1} \biggl| \tilde{q}_n \biggl( x+\f{a}{n} + \f{z}{n}\biggr)\biggr| \lb{3.2}
\end{align}
By a Schwarz inequality, for $x,y\in\bbC$,
\begin{equation} \lb{3.3}
\f{1}{n+1}\, \abs{K_n(x,y)} \leq (\tilde{q}_n(x) \tilde{q}_n(y))^{1/2}
\end{equation}

Thus, using the assumed \eqref{1.30}, for any $x_0$ for which \eqref{1.30} holds and any $A<\infty$, there
are $N_0$ and $C$ so for $n\geq N_0$,
\begin{equation} \lb{3.4}
\biggl| \tilde{q}_n \biggl( x_0 + \f{a}{n}\biggr) - \tilde{q}_n \biggl( x_0 + \f{b}{n}\biggr)\biggr| \leq C\abs{a-b}
\end{equation}
for all $a,b$ with $\abs{a}\leq A$, $\abs{b}\leq A$.

Since each $\tilde{q}_n$ is continuous and the convergence is uniform on $\K$, $\tilde{q}$ is continuous on
$\K$. Thus,
we have for each $A<\infty$,
\begin{equation} \lb{3.5}
\sup \biggl\{\bigg| \tilde{q} \biggl( x_0 + \f{a}{n}\biggr) - \tilde{q} (x_0)\biggr| \biggm| \abs{a}<A,\,
x_0 + \f{a}{n}\in \K\biggr\} \to 0
\end{equation}
as $n\to\infty$. By the uniform convergence,
\begin{equation} \lb{3.6x}
\sup \biggl\{\biggl| \tilde{q}_n \biggl( x_0 + \f{a}{n}\biggr) - \tilde{q}_n(x_0)\biggr| \biggm| \abs{a}<A,\,
x_0 + \f{a}{n}\in \K\biggr\} \to 0
\end{equation}

We next use the fact that \eqref{3.1} holds. It implies that
\begin{equation} \lb{3.6}
\sup_{\abs{b}\leq A}\, n\, \dist \biggl( x_0 + \f{b}{n}\, , \K\biggr) \to 0
\end{equation}
or equivalently, for any $\veps$, there is an $N_1$ so for $n\geq N_1$ and $\abs{b}< A$, there exists
$\abs{a} <A$ ($a$ will be $n$-dependent) so that $\abs{a-b}<\veps$ and $x_0 + \f{a}{n}\in
\K$. We have that
\begin{equation} \lb{3.7}
\biggl| \tilde{q}_n \biggl( x_0 + \f{b}{n}\biggr) - \tilde{q}_n(x_0)\biggr| \leq \biggl| \tilde{q}_n \biggl( x_0 + \f{b}{n}\biggr)
- \tilde{q}_n \biggl( x_0 + \f{a}{n}\biggr)\biggr| + \biggl| \tilde{q}_n \biggl( x_0 + \f{a}{n}\biggr) - \tilde{q}_n (x_0)\biggr|
\end{equation}
where $\abs{b-a}<\veps$ and $x_0 + \f{a}{n}\in \K$. By \eqref{3.4}, if $n\geq\max(N_0,N_1)$, the first
term is bounded by $C\veps$, and by \eqref{3.6}, the second term goes to zero, that is,
\begin{equation} \lb{3.8}
\sup_{\abs{b}<A}\, \biggl| \tilde{q}_n \biggl( x_0 + \f{b}{n}\biggr) - \tilde{q}_n(x_0)\biggr| \to 0
\end{equation}
Since $\tilde{q}_n(x_0)\to \tilde{q}(x_0)\neq 0$, we have
\begin{equation} \lb{3.9}
\sup_{\abs{b}<A}\, \biggl| \f{\tilde{q}_n(x_0 + \f{b}{n})}{\tilde{q}_n(x_0)} -1 \biggr| \to 0
\end{equation}
as $n\to\infty$, which is \eqref{1.26}.
\end{proof}

\section{Exponential Bounds for Perturbed Transfer Matrices} \lb{s4}

In this section, our goal is to prove Theorem~3. As noted in the introduction, our approach is
an extension of a theorem of Avila--Krikorian \cite[Lemma~3.1]{AvKr} exploiting that one can
avoid using cocycles and so go beyond the apparent limitation to ergodic situations. The
argument here is related to but somewhat different from variation of parameters techniques
(see, e.g., Jitomirskaya--Last \cite{JL} and Killip--Kiselev--Last \cite{KKL}) and should
have wide applicability.

\begin{proof}[Proof of Theorem~3] Fix $n$ and define for $j=1,2,\dots, n$,
\begin{align}
\ti A_j &= A_j \biggl( x_0 + \f{z}{n+1}\biggr) \lb{4.1} \\
A_j &= A_j(x_0) \lb{4.2} \\
T_j & = A_j \dots A_1 \qquad \ti T_j = \ti A_j \dots \ti A_1 \lb{4.3}
\end{align}
(Note that $\ti A_j$ and $\ti T_j$ are both $j$- and $n$-dependent.)

Note that, by \eqref{1.32},
\begin{equation} \lb{4.4}
\ti A_j - A_j = a_j^{-1} \begin{pmatrix}
\f{z}{n+1} & 0 \\
0 & 0 \end{pmatrix}
\end{equation}
so that
\begin{equation} \lb{4.5}
\norm{\ti A_j - A_j} \leq \alpha_-^{-1}\, \f{\abs{z}}{n+1}
\end{equation}

Write
\begin{align}
T_j^{-1} \ti T_j &= (T_j^{-1} \ti A_j T_{j-1})(T_{j-1}^{-1} \ti A_{j-1} T_{j-2})
\dots (T_1^{-1} \ti A_1 T_0) \lb{4.6} \\
&= (1+B_j) (1+B_{j-1}) \dots (1+B_1) \lb{4.7}
\end{align}
where
\begin{equation} \lb{4.8}
B_k = T_k^{-1} (\ti A_k -A_k) T_{k-1}
\end{equation}
Here we used
\begin{equation} \lb{4.9}
A_k T_{k-1} = T_k
\end{equation}

Since $T_k$ has determinant $1$ (see \eqref{1.34}), we have
\begin{equation} \lb{4.10}
\norm{T_k^{-1}} = \norm{T_k}
\end{equation}
So, by \eqref{4.5},
\begin{equation} \lb{4.11}
\norm{B_k} \leq \norm{T_k}\, \norm{T_{k-1}} \alpha_-^{-1}\, \f{\abs{z}}{n+1}
\end{equation}
Thus, since
\begin{equation} \lb{4.12}
\norm{1+B_j} \leq 1 + \norm{B_j} \leq \exp(\norm{B_j})
\end{equation}
\eqref{4.7} implies that
\begin{equation} \lb{4.13}
\norm{\ti T_j} \leq \norm{T_j} \exp\biggl( \alpha_-^{-1} \abs{z} \biggl[ \f{1}{n+1}
\sum_{k=1}^j \, \norm{T_k}\, \norm{T_{k-1}}\biggr]\biggr)
\end{equation}

By the Schwarz inequality, for $j=1,2,\dots, n$,
\begin{align}
\f{1}{n+1} \sum_{k=1}^j\, \norm{T_k}\, \norm{T_{k-1}}
&\leq \f{1}{n+1} \sum_{k=0}^j \, \norm{T_k}^2 \notag \\
&\leq \f{1}{n+1} \sum_{k=0}^n\, \norm{T_k}^2 \lb{4.14}
\end{align}
Using \eqref{1.39} and \eqref{4.13}, we find
\begin{equation} \lb{4.15}
\norm{\ti T_j} \leq \norm{T_j} \exp(C\alpha_-^{-1} \abs{z})
\end{equation}
This clearly holds for $j=0$ also. Squaring and summing,
\begin{equation} \lb{4.16}
\f{1}{n+1} \sum_{j=0}^n \, \norm{\ti T_j}^2 \leq \biggl( \f{1}{n+1} \sum_{j=0}^n \,
\norm{T_j}^2\biggr) \exp (2C\alpha_-^{-1} \abs{z})
\end{equation}
which is \eqref{1.40}.

Note that \eqref{1.41} implies \eqref{1.39} so that \eqref{1.42} is just \eqref{4.15}.
\end{proof}

We note that the argument above can also be used for more general
perturbative bounds. For example, suppose that
\begin{equation} \lb{4.16a}
C_1 \equiv \sup_n \norm{T_n(x_0)} <\infty
\end{equation}
for a given set of Jacobi parameters. Let $a'_n =a_n + \delta a_n$ and $b'_n =b_n + \delta b_n$ with
\begin{equation} \lb{4.17}
C_2 \equiv \sum_{n=1}^\infty\, \abs{\delta a_n} + \abs{\delta b_n} <\infty
\end{equation}
and
\begin{equation} \lb{4.18}
\alpha'_- =\inf\, a'_n >0
\end{equation}

Defining $\ti A_n, \ti T_n$ at energy $x_0$ but with $\{a'_n, b'_n\}_{n=1}^\infty$ Jacobi parameters,
one gets
\begin{equation} \lb{4.19}
\norm{\ti A_k -A_k} \leq C_3 [\alpha_-^{-1} + (\alpha'_-)^{-1}] (\abs{\delta a_k} + \abs{\delta b_k})
\end{equation}
for some universal constant $C_3$. Thus
\begin{equation} \lb{4.20}
\norm{B_k} \leq C_3 C_1^2 [\alpha_-^{-1} + (\alpha'_n)^{-1}] (\abs{\delta a_k} + \abs{\delta b_k})
\end{equation}
and
\begin{equation} \lb{4.21}
\norm{\ti T_n} \leq C_1 \exp (C_1^2 C_2 C_3 [\alpha_-^{-1} + (\alpha'_-)^{-1}])
\end{equation}
providing another proof of a standard $\ell^1$ perturbation result.

\section{Ergodic Jacobi Matrices and Ces\`aro Summability} \lb{s5}

In this section, our goal is to prove Theorem~4. We fix an ergodic Jacobi matrix setup.
We will need to use special solutions found by Deift--Simon in 1983:

\begin{theorem}[Deift--Simon \cite{S169}] \lb{T5.1} For any Jacobi matrix with $\Sigma_\ac (d\mu_\omega)$
{\rm{(}}which is a.e.\ $\omega$-independent{\rm{)}} of positive measure, for a.e.\ pairs  $(x,\omega)\in
\Sigma_\ac\times\Omega$ {\rm{(}}a.e.\ with respect to $dx\otimes d\eta(\omega)${\rm{)}}, there exist
sequences $\{u_n^\pm (x,\omega)\}_{n=-\infty}^\infty$ so that
\begin{equation} \lb{5.1}
T_n (x,\omega) \binom{u_1^\pm (x,\omega)}{a_0u_0^\pm(x,\omega)} =
\binom{u_{n+1}^\pm (x,\omega)}{a_n u_n^\pm (x,\omega)}
\end{equation}
with the following properties:
\begin{alignat}{2}
&\text{\rm{(i)}} \qquad && u_n^- (x,\omega) = \ol{u_n^+ (x,\omega)} \lb{5.2} \\
&\text{\rm{(ii)}} \qquad && a_n (u_{n+1}^+ u_n^- - u_{n+1}^- u_n^+) =-2i \lb{5.3} \\
&\text{\rm{(iii)}} \qquad && \abs{u_n^+ (x,\omega)} = \abs{u_0^+ (x, S^n \omega)} \lb{5.4} \\
&\text{\rm{(iv)}} \qquad && \int \abs{u_n^+ (x,\omega)}^2\, d\eta(\omega) <\infty \lb{5.5} \\
&\text{\rm{(v)}} \qquad && u_0^\pm \text{ is real} \lb{5.6}
\end{alignat}
\end{theorem}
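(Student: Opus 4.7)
The plan is to construct $u^\pm_n(x,\omega)$ from non-tangential boundary values of the Weyl--Titchmarsh $m$-functions of the two half-line restrictions of $J_\omega$, leveraging Kotani theory to guarantee existence on a full-measure subset of $\Sigma_\ac \times \Omega$. Split $J_\omega$ at sites $0/1$ into a right half-line operator (with Jacobi parameters $\{a_n(\omega),b_n(\omega)\}_{n\geq 1}$) and a left half-line operator, and let $m^\pm_\omega(z)$ denote the corresponding Herglotz $m$-functions. These have finite non-tangential boundary values $m^\pm_\omega(x+i0)$ for Lebesgue-a.e.\ $x$, and by Kotani theory $\Sigma_\ac$ coincides (up to null sets) with the essential support of $\{x : \Ima\, m^+_\omega(x+i0) > 0\}$ and likewise for $m^-_\omega$. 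By Fubini, both positivities hold simultaneously on a set of full $dx\otimes d\eta$-measure inside $\Sigma_\ac\times\Omega$.

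On this full-measure set, define $u^+_n(x,\omega)$ to be the Jacobi-equation solution obtained by gluing (at the boundary) the $\ell^2$-at-$+\infty$ Weyl solution on $n\geq 1$ with the $\ell^2$-at-$-\infty$ Weyl solution on $n\leq 0$; the strict positivity of $\Ima\, m^\pm_\omega(x+i0)$ is precisely the statement that these two half-line Weyl solutions are linearly independent, so the glued object is a genuine nonzero whole-line solution, uniquely determined up to a complex scalar. Putting $u^-_n(x,\omega) = \overline{u^+_n(x,\omega)}$ yields \eqref{5.1} and \eqref{5.2} automatically because real $x$ sends complex-conjugate solutions to complex-conjugate solutions. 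The Jacobi Wronskian $a_n(u^+_{n+1}u^-_n - u^-_{n+1}u^+_n)$ is $n$-independent and purely imaginary, so a positive rescaling of $u^+$ normalizes it to $-2i$, giving \eqref{5.3}. A final phase rotation $u^+ \mapsto e^{i\theta}u^+$ with $\theta \in [0,\pi)$ (which preserves both the Wronskian normalization and all moduli) can be chosen to make $u^+_0$ real, producing \eqref{5.6}.

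Property \eqref{5.4} is covariance and follows from the one-step Schur/M\"obius relation $m^+_{S\omega}(z) = a_1(\omega)^{-2}(b_1(\omega)-z - m^+_\omega(z)^{-1})^{-1}$ (or its standard variant), combined with the transfer-matrix identity \eqref{5.1}: these force $u^+_n(x,\omega)$ and $u^+_0(x,S^n\omega)$ to differ by a modulus-one complex scalar determined by the normalizations of the previous step, so taking absolute values gives \eqref{5.4}.

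The main obstacle is \eqref{5.5}, which demands passing from the pointwise a.e.\ construction to genuine $L^2(d\eta)$ integrability. The key identity will be the Poisson/Green's-function representation expressing the a.c.\ density $w_\omega(x)$ of $d\mu_\omega$ in the form
\begin{equation*}
w_\omega(x) \;=\; \f{1}{\pi}\, \f{\Ima\, m^+_\omega(x+i0) + \Ima\, m^-_\omega(x+i0)}{\big|\,\text{(fixed combination of }u^+_0,u^+_1)\,\big|^2},
\end{equation*}
so that $|u^+_0(x,\omega)|^{-2}$ controls $w_\omega(x)$ up to a ratio bounded by the half-line densities. Combining the total-mass constraint $\int d\mu_\omega = 1$ with the covariance \eqref{5.4} and Fubini in $(x,\omega)$ then yields finiteness of $\int|u^+_n(x,\omega)|^2\,d\eta(\omega)$ for a.e.\ $x\in\Sigma_\ac$. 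The delicate point is coordinating the normalizations imposed for \eqref{5.3} and \eqref{5.6} with the Poisson identity so that all five conditions hold simultaneously on a common full-measure set.
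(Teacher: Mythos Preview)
The paper does not prove this theorem---it is quoted from Deift--Simon. However, immediately after the statement the paper records the explicit normalization (equations (5.12)--(5.13)), using only the \emph{right} half-line $m$-function:
\[
u_0^+(x,\omega)=\frac{1}{a_0[\Ima m(x+i0,\omega)]^{1/2}},\qquad
u_1^+(x,\omega)=-\frac{m(x+i0,\omega)}{[\Ima m(x+i0,\omega)]^{1/2}},
\]
with $u_n^+$ for all other $n$ determined by the recurrence. Properties (i)--(iii) and (v) are then direct computations from these two numbers, and (iv) is the Kotani-theory input recorded at (5.19): $\int [a_0(\omega)^2\,\Ima m(x+i0,\omega)]^{-1}\,d\eta(\omega)<\infty$ for a.e.\ $x\in\Sigma_\ac$.

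Your construction has a genuine logical gap. You define $u^+_n$ by ``gluing'' the right Weyl solution on $n\geq 1$ to the left Weyl solution on $n\leq 0$, and justify this by saying that strict positivity of $\Ima m^\pm$ makes the two half-line Weyl solutions linearly \emph{independent}. But linear independence is exactly what prevents gluing: two solutions of a second-order recurrence patch into a single whole-line solution only when they are \emph{proportional} at the interface. (On $\Sigma_\ac$ the reflectionless property does relate the left Weyl solution to the \emph{conjugate} of the right one, but that is not the statement you wrote, and in any case it is not needed.) The paper's route avoids the issue entirely: the right Weyl solution, extended to all $n\in\bbZ$ by the recurrence, is already a whole-line solution; one then sets $u^-=\overline{u^+}$, and $\Ima m>0$ is used only to guarantee that $u^+$ and $\overline{u^+}$ are independent, i.e., that the Wronskian in (ii) is nonzero and the normalization is possible.

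Your approach to (iv) via a Poisson/Green identity and the total-mass constraint $\int d\mu_\omega=1$ is also more indirect than needed, and as written is only a sketch. With the explicit formula $|u_0^+|^2=[a_0^2\,\Ima m]^{-1}$, property (iv) is literally the Deift--Simon/Kotani bound $\bbE[(a_0^2\,\Ima m)^{-1}]<\infty$; no further manipulation is required.
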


Of course, by \eqref{5.4}, the integral in \eqref{5.5} is $n$-independent. For later purposes (see
Section~\ref{s6}), we will need an explicit formula for this integral. In fact, we will need explicit
formulae for $u_0, u_{-1}$ in terms of the $m$-function.

One defines for $\Ima z>0$, $\ti u_n^+ (z,\omega)$ to solve (i.e., \eqref{5.1})
\begin{equation} \lb{5.7}
a_n \ti u_{n+1}^+ + (b_n-z) \ti u_n^+ + a_{n-1} \ti u_{n-1}^+ =0
\end{equation}
with $\sum_{n=1}^\infty \abs{\ti u_n^+}^2 <\infty$. This determines $\ti u_n^+$ up to a constant, and so,
\begin{equation} \lb{5.8}
m(z,\omega) = -\f{\ti u_1^+ (z,\omega)}{a_0 \ti u_0^+ (z,\omega)}
\end{equation}
is normalization-independent and obeys, by \eqref{5.7},
\begin{equation} \lb{5.9}
m(z,\omega) = \f{1}{-z+b_1 -a_1^2 m(z,S\omega)}
\end{equation}
[{\it Note}: We have suppressed the $\omega$-dependence of $a_n,b_n$.]

As usual with solutions of \eqref{5.9},
\begin{equation} \lb{5.10}
m(z,\omega) = \int \f{d\mu_\omega^+ (x)}{x-z}
\end{equation}
where $d\mu_\omega^+$ is the measure associated to the half-line Jacobi matrix, $J_\omega$.

For a.e.\ $x\in\Sigma_\ac$ and a.e.\ $\omega$, $m(x+i0,\omega)$ exists and has
\begin{equation} \lb{5.11}
\Ima m(x+i0, \omega) >0 \qquad (\text{a.e. } x\in\Sigma_\ac)
\end{equation}
We normalize the solution $u^+$ obeying Theorem~\ref{T5.1} by defining:
\begin{align}
u_0^+ (x,\omega) &= \f{1}{a_0 [\Ima m(x+i0,\omega)]^{1/2}} \lb{5.12} \\
u_1^+ (x,\omega) &= -\f{m(x+i0,\omega)}{[\Ima m(x+i0,\omega)]^{1/2}} \lb{5.13}
\end{align}
(We have listed all the formulae because \cite{S169} only consider the case $a_n\equiv 1$.)
$u_n^+$ are then determined by the difference equation and $u_n^-$ by \eqref{5.2}.

Of course, we have
\begin{equation} \lb{5.14}
p_n = \f{u_{n+1}^+ - u_{n+1}^-}{u_1^+ - u_1^-}
\end{equation}
since both sides obey the same difference equations with $p_{-1}=0$ (since $u_0^+ = u_0^-$) and $p_0=1$.

By \eqref{5.14}, to prove Theorem~4 we need to show
\begin{equation} \lb{5.14a}
\f{1}{n}\, \sum_{j=0}^{n-1} (u_{j+1}^+ - u_{j+1}^-)^2
\end{equation}
exists. This follows from the existence of
\begin{equation} \lb{5.14b}
\lim_{n\to\infty}\, \f{1}{n}\, \sum_{j=1}^n \, \abs{u_j^+}^2
\end{equation}
and
\begin{equation} \lb{5.14c}
\lim_{n\to\infty}\, \f{1}{n}\, \sum_{j=1}^n (u_j^+)^2
\end{equation}

From \eqref{5.4} and the ergodic theorem (plus \eqref{5.5}), the a.e.\ $\omega$ existence of the limit
in \eqref{5.14b} is immediate. In cases like the almost Mathieu equation with Diophantine frequencies
where $u_n^+$ is almost periodic, one also gets the existence of the limit in \eqref{5.14c} directly,
but there are examples, like the almost Mathieu equation with frequencies whose dual has singular
continuous spectrum, where the phase of $u_n^+$ is not almost periodic. So this argument does not work in
general. In fact, we will eventually prove that for a.e.\ $(x,\omega)$ in $\Sigma_\ac \times \Omega$
(see Theorem~\ref{T6.3})
\begin{equation} \lb{5.14d}
\lim_{n\to\infty}\, \f{1}{n}\, \sum_{j=1}^n (u_j^+)^2 =0
\end{equation}
It would be interesting to have a direct proof of this (for the periodic case, see \cite{Rice}) rather
than the indirect path we will take.

Define the $2\times 2$ matrix
\begin{equation} \lb{5.15}
U_n(x,\omega) = \f{1}{(-2i)^{1/2}} \begin{pmatrix}
u_{n+1}^+ (x,\omega) & u_{n+1}^- (x,\omega) \\
a_n u_n^+ (x,\omega) & a_n u_n^- (x,\omega)
\end{pmatrix}
\end{equation}
(where we fix once and for all a choice of $\sqrt{-2i}$). By \eqref{5.3},
\begin{equation} \lb{5.16}
\det(U_n(x,\omega)) =1
\end{equation}
and, by \eqref{5.1},
\begin{equation} \lb{5.17}
T_n (x,\omega) U_0(x,\omega) = U_n(x,\omega)
\end{equation}
or
\begin{equation} \lb{5.18}
T_n(x,\omega) = U_n(x,\omega) U_0 (x,\omega)^{-1}
\end{equation}

For now, we fix $x\in\Sigma_\ac$ with
\begin{equation} \lb{5.19}
E([a_0(\omega)^2 \Ima m(x+i0,\omega)]^{-1}) <\infty
\end{equation}
(known Lebesgue a.e.\ by Kotani theory; see \cite{S168,S169}), so $U_n$ can be defined and is in $L^2$.
We are heading towards a proof of

\begin{theorem}\lb{T5.2} For any fixed matrix, $Q$, a.e.\ $\omega$, as matrices
\begin{equation} \lb{5.20}
\lim_{n\to\infty}\, \f{1}{n}\, \sum_{j=0}^{n-1} T_j(x,\omega)^t QT_j(x,\omega)
\end{equation}
exists.
\end{theorem}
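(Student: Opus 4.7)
My plan is to exploit \eqref{5.18} to replace the transfer matrices by the matrices $U_j$, which are controlled by the ergodic structure through \eqref{5.4}. Since $U_0(x,\omega)$ is a fixed (a.s.\ invertible by \eqref{5.16}) matrix, it suffices to prove that $\frac{1}{n}\sum_{j=0}^{n-1} U_j(x,\omega)^t Q\, U_j(x,\omega)$ converges a.e.\ $\omega$, and then multiply by $(U_0^{-1})^t$ on the left and $U_0^{-1}$ on the right. By linearity in $Q$, it is enough to handle four basis matrices $Q$, so we can fix one $Q$.

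The key step is a ``phase factorization'' of $U_j$. For each $n$, both $\{u_{n+k}^+(x,\omega)\}_{k\ge 0}$ and $\{u_k^+(x,S^n\omega)\}_{k\ge 0}$ satisfy the same recursion (because $a_{n+k}(\omega)=a_k(S^n\omega)$ and similarly for $b$) and are $\ell^2$ at $+\infty$. By uniqueness of the $\ell^2$ forward solution, they differ by a scalar multiple; by \eqref{5.4} that scalar has modulus one, so
\begin{equation}
u_{n+k}^+(x,\omega)= e^{i\gamma_n(x,\omega)}\, u_k^+(x,S^n\omega) \qquad (k\ge 0)
\end{equation}
for some real phase $\gamma_n$. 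Setting $M(\omega)\equiv U_0(x,\omega)$ and $R_n=\mathrm{diag}(e^{i\gamma_n},e^{-i\gamma_n})$ (the second entry coming from \eqref{5.2}), this gives
\begin{equation}
U_n(x,\omega)=M(S^n\omega)\, R_n, \qquad U_n^t Q U_n = R_n\, F(S^n\omega)\, R_n
\end{equation}
where $F\equiv M^t Q M$ has entries in $L^1(d\eta)$ thanks to \eqref{5.5}. Moreover, comparing the $k=1$ version of the factorization at step $n$ with the $k=0$ version at step $n+1$, and using that $u_0^+$ is real and positive (by \eqref{5.6}, \eqref{5.12}, \eqref{5.11}), one finds the cocycle identity $\gamma_{n+1}=\gamma_n+\phi(S^n\omega)$, where $\phi(\omega)=\arg u_1^+(x,\omega)=\pi+\arg m(x+i0,\omega)$ is a fixed bounded measurable function of $\omega$. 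Hence $\gamma_n=\gamma_0+\sum_{j=0}^{n-1}\phi(S^j\omega)\pmod{2\pi}$.

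Writing out the $(k,l)$ entry gives $\bigl(R_j F(S^j\omega) R_j\bigr)_{kl}=e^{i(\epsilon_k+\epsilon_l)\gamma_j}F_{kl}(S^j\omega)$ with $\epsilon_1=1$, $\epsilon_2=-1$. The off-diagonal entries have $\epsilon_k+\epsilon_l=0$, so their Cesàro averages are ordinary Birkhoff sums of $F_{kl}$ and converge a.e.\ by the Birkhoff ergodic theorem. For the diagonal entries one must show that
\begin{equation}
\frac{1}{n}\sum_{j=0}^{n-1} e^{\pm 2i\gamma_j}\, F_{kk}(S^j\omega)
\end{equation}
converges a.e., which is the main obstacle since $\gamma_j$ is itself an ergodic sum rather than a constant phase. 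The plan is to build the skew product $\widetilde S\colon \Omega\times\mathbb{T}\to\Omega\times\mathbb{T}$, $\widetilde S(\omega,\theta)=(S\omega,\theta+2\phi(\omega))$, which preserves the product measure $d\eta\otimes d\theta/2\pi$, and apply Birkhoff's theorem to the $L^1$ function $H(\omega,\theta)=e^{i\theta}F_{kk}(\omega)$. A direct calculation gives
\begin{equation}
\frac{1}{n}\sum_{j=0}^{n-1} H\bigl(\widetilde S^j(\omega,\theta)\bigr)=e^{i\theta}\,\frac{1}{n}\sum_{j=0}^{n-1} e^{2i(\gamma_j-\gamma_0)}F_{kk}(S^j\omega),
\end{equation}
so Birkhoff on $(\Omega\times\mathbb{T},\widetilde S)$ yields a.e.\ convergence of the right-hand side as a function of $(\omega,\theta)$; Fubini then forces convergence for a.e.\ $\omega$ at every $\theta$ (since the right side factors as $e^{i\theta}$ times a $\theta$-independent expression, a.e.\ convergence in $\theta$ for one $\omega$ forces convergence for all $\theta$ at that $\omega$). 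Multiplying by $e^{i\gamma_0}$ recovers the average with $\gamma_j$ in place of $\gamma_j-\gamma_0$. Combining the diagonal and off-diagonal contributions gives convergence of $\frac{1}{n}\sum U_j^t Q U_j$, and conjugating by $(U_0^{-1})^t$ and $U_0^{-1}$ completes the proof.
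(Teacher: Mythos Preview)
Your argument is correct, and it follows a genuinely different route from the paper's proof.

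The paper constructs the suspension $\widehat\Omega=\Omega\times\mathrm{SL}(2,\bbC)$ with the cocycle action $\widehat S(\omega,C)=(S\omega,A(\omega)C)$, builds an $\widehat S$-invariant probability measure on $\widehat\Omega$ by a Krylov--Bogolyubov compactness argument (using the $L^1$ bound on $\norm{T_n}$ from Lemma~\ref{L5.3}), and then applies Birkhoff's theorem to $h(\omega,C)=C^tQC$. The difficulty is that $h$ lies only in $L^{1/2}(d\nu)$ a~priori, not $L^1$, so a separate truncation lemma (Lemma~\ref{L5.5}) is needed to upgrade to genuine convergence.

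Your approach sidesteps both the Krylov--Bogolyubov step and the truncation. By factoring $U_n(x,\omega)=U_0(x,S^n\omega)\,\mathrm{diag}(e^{i\gamma_n},e^{-i\gamma_n})$ with $\gamma_n$ an additive cocycle over $S$, you land on the much smaller skew product $\Omega\times\bbT$ with the \emph{explicit} invariant measure $d\eta\otimes d\theta/2\pi$, and the relevant function $H(\omega,\theta)=e^{i\theta}F_{kk}(\omega)$ is already in $L^1$ because $\norm{U_0}^2\in L^1(d\eta)$ by \eqref{5.5}. The Fubini trick to pass from a.e.\ $(\omega,\theta)$ to a.e.\ $\omega$ is clean, since the $\theta$-dependence is just a global phase. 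This is more elementary and more transparent; the price is that it leans on the specific structure of the Deift--Simon solutions (reality and positivity of $u_0^+$, the modulus identity \eqref{5.4}), whereas the paper's suspension argument uses only \eqref{5.18} and the $L^2$ bound and would survive in settings where such a phase factorization is unavailable. It is worth noting that the phase cocycle you exploit here is exactly what the paper introduces later, in \eqref{6.20}--\eqref{6.23}, for a different purpose; you are effectively front-loading that structure.
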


\begin{proof}[Proof of Theorem~4 given Theorem~\ref{T5.2}] Pick $Q=\left(\begin{smallmatrix}
1&0 \\ 0&0 \end{smallmatrix}\right)$. Then the $1,1$ matrix element of $T_j(x,\omega)^t QT_j
(x,\omega)$ is $p_j(x,\omega)^2$, so \eqref{1.45} holds. Similarly, the $2,2$ matrix element is
$q_j(x,\omega)^2$.
\end{proof}

\eqref{5.18} plus \eqref{5.5} will imply critical a priori bounds on $\norm{T_n(x,\dott)}_{L^1(d\eta)}$.
It will be convenient to use the Hilbert--Schmidt norm on these $2\times 2$ matrices.

\begin{lemma} \lb{L5.3} We have
\begin{equation} \lb{5.21}
\sup_n \int \norm{T_n (x,\omega)}\, d\eta(\omega) <\infty
\end{equation}
\end{lemma}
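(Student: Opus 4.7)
The plan is to read $T_n$ through the factorization
\eqref{5.18}, so $T_n = U_n U_0^{-1}$, and to estimate each factor
separately in $L^2(d\eta)$, with Cauchy--Schwarz providing the desired
$L^1$ bound.

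The first observation I would use is that because $\det U_0 = 1$
(by \eqref{5.16}), inversion is an isometry in the Hilbert--Schmidt norm
on $2\times 2$ matrices: if $M=\bigl(\begin{smallmatrix}a & b\\ c &
d\end{smallmatrix}\bigr)$ has $\det M=1$, then $M^{-1}=\bigl(\begin{smallmatrix}d & -b\\ -c &
a\end{smallmatrix}\bigr)$, so $\norm{M^{-1}}_{\text{HS}}^2 =
a^2+b^2+c^2+d^2 = \norm{M}_{\text{HS}}^2$. Thus
$\norm{U_0(x,\omega)^{-1}} = \norm{U_0(x,\omega)}$, and submultiplicativity gives
\begin{equation*}
\norm{T_n(x,\omega)} \leq \norm{U_n(x,\omega)} \, \norm{U_0(x,\omega)}.
\end{equation*}
By Cauchy--Schwarz in $\omega$,
\begin{equation*}
\int \norm{T_n(x,\omega)}\, d\eta(\omega) \leq
\biggl(\int \norm{U_n(x,\omega)}^2\, d\eta(\omega)\biggr)^{1/2}
\biggl(\int \norm{U_0(x,\omega)}^2\, d\eta(\omega)\biggr)^{1/2}.
\end{equation*}

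The next step is to compute the Hilbert--Schmidt norm of $U_n$ explicitly from
\eqref{5.15}. Using $u_n^- = \overline{u_n^+}$ from \eqref{5.2}, we
obtain
\begin{equation*}
\norm{U_n(x,\omega)}^2 = \abs{u_{n+1}^+(x,\omega)}^2 + a_n(\omega)^2 \,
\abs{u_n^+(x,\omega)}^2.
\end{equation*}
By \eqref{5.4}, $\abs{u_j^+(x,\omega)}^2 =
\abs{u_0^+(x,S^j\omega)}^2$, so by $S$-invariance of $d\eta$ each integral
$\int \abs{u_j^+(x,\omega)}^2\, d\eta(\omega)$ equals
$\int \abs{u_0^+(x,\omega)}^2\, d\eta(\omega)$. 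Since $\abs{a_n(\omega)}\leq
\alpha_+$, this yields the $n$-independent bound
\begin{equation*}
\int \norm{U_n(x,\omega)}^2\, d\eta(\omega) \leq
(1+\alpha_+^2)\int \abs{u_0^+(x,\omega)}^2\, d\eta(\omega).
\end{equation*}
Finally, by the normalization \eqref{5.12}, $\abs{u_0^+(x,\omega)}^2 =
[a_0(\omega)^2 \Ima m(x+i0,\omega)]^{-1}$, whose $d\eta$-integral is
finite by \eqref{5.19}. The same estimate handles $\int
\norm{U_0(x,\omega)}^2 d\eta(\omega)$, completing the proof.

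There is no serious obstacle here; the decisive point is noticing that the
unimodularity of $U_0$ turns the inversion into an isometry, so that the
apparently dangerous $U_0^{-1}$ factor is controlled by the same Kotani
integrability \eqref{5.19} that controls $U_n$ itself. The use of
$S$-invariance of $d\eta$ in conjunction with \eqref{5.4} is what makes
the estimate uniform in $n$.
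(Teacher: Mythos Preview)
Your proof is correct and follows essentially the same approach as the paper: factor $T_n=U_nU_0^{-1}$ via \eqref{5.18}, use $\det U_0=1$ to get $\norm{U_0^{-1}}=\norm{U_0}$, then apply Cauchy--Schwarz and the covariance \eqref{5.4} with $S$-invariance of $d\eta$. The only cosmetic difference is that the paper observes the exact identity $\norm{U_j(x,\omega)}=\norm{U_0(x,S^j\omega)}$ (since $a_j(\omega)=a_0(S^j\omega)$ as well), giving $\int\norm{U_n}^2\,d\eta=\int\norm{U_0}^2\,d\eta$ directly, whereas you introduce the harmless factor $(1+\alpha_+^2)$.
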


\begin{proof} Since $\det(U_n) =1$,
\begin{equation} \lb{5.22}
\norm{U_n(x,\omega)^{-1}} = \norm{U_n(x,\omega)}
\end{equation}
Thus, by \eqref{5.18},
\begin{equation} \lb{5.23}
\norm{T_n(x,\omega)} \leq \norm{U_n(x,\omega)}\, \norm{U_0(x,\omega)}
\end{equation}
By the Schwarz inequality,
\begin{align*}
\sup_n \int \norm{T_n(x,\omega)} \, d\eta(\omega)
&\leq \sup_n \int \norm{U_n(x,\omega)}^2 \, d\eta(\omega) \\
&= \int \norm{U_0(x,\omega)}^2 \, d\eta(\omega) \\
&< \infty
\end{align*}
by \eqref{5.5} and the fact that since \eqref{5.4} holds and we use Hilbert--Schmidt norms,
\begin{equation} \lb{5.24}
\norm{U_j (x,\omega)} = \norm{U_0(x,S^j\omega)}
\end{equation}
\end{proof}

Let $A_j(\omega)$ be the matrix \eqref{1.32} with $a_j =a_j(\omega)$, $b_j=b_j(\omega)$ and let
\begin{equation} \lb{5.25a}
A(\omega) \equiv A_1(\omega)
\end{equation}
so
\begin{equation} \lb{5.25b}
A_j(\omega) = A(S^{j-1} \omega)
\end{equation}
and the transfer matrix for $J_\omega$ is
\begin{equation} \lb{5.25c}
T_n(\omega) = A(S^{n-1}\omega) \dots A(\omega)
\end{equation}

Now form the suspension
\begin{equation} \lb{5.26}
\widehat\Omega =\Omega\times \bbS\bbL(2,\bbC)
\end{equation}
and define $\widehat S\colon\widehat\Omega \to\widehat\Omega$ by
\begin{equation} \lb{5.27}
\widehat S(\omega,C) = (S\omega, A(\omega) C)
\end{equation}
so
\begin{equation} \lb{5.28}
\widehat S^n(\omega,C) = (S^n \omega, T_n(\omega)C)
\end{equation}

\begin{theorem} \lb{T5.4} There exists an $\widehat S$-invariant probability measure, $d\nu$,
on $\widehat\Omega$ whose projection onto $\Omega$ is $d\eta$ and with
\begin{equation} \lb{5.29}
\int \norm{C}\, d\nu(\omega,C) <\infty
\end{equation}
\end{theorem}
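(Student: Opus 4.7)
The plan is to construct $d\nu$ via a Krylov--Bogolyubov averaging argument, using Lemma~\ref{L5.3} to guarantee tightness on the non-compact space $\widehat\Omega$.

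First I would fix the initial probability measure $\mu_0 = d\eta \otimes \delta_I$ on $\widehat\Omega$, where $I$ is the identity in $\bbS\bbL(2,\bbC)$. By \eqref{5.28}, the pushforward $(\widehat S^j)_* \mu_0$ is the distribution of $(S^j\omega, T_j(\omega))$ when $\omega \sim d\eta$, so by $S$-invariance of $d\eta$, its projection onto $\Omega$ is again $d\eta$. Form the Ces\`aro averages
\begin{equation}
\nu_n = \f{1}{n} \sum_{j=0}^{n-1} (\widehat S^j)_* \mu_0.
\end{equation}
Each $\nu_n$ is a probability measure on $\widehat\Omega$ whose projection onto $\Omega$ is $d\eta$, and
\begin{equation}
\int \norm{C}\, d\nu_n(\omega, C) = \f{1}{n} \sum_{j=0}^{n-1} \int \norm{T_j(\omega)}\, d\eta(\omega) \leq M < \infty
\end{equation}
by Lemma~\ref{L5.3}, with $M$ independent of $n$.

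Next I would verify tightness. For $R > 0$, the set $K_R = \Omega \times \{C \in \bbS\bbL(2,\bbC) : \norm{C} \leq R\}$ is compact (product of the compact $\Omega$ with a closed bounded subset of $\bbS\bbL(2,\bbC)$), and by Markov's inequality
\begin{equation}
\nu_n(\widehat\Omega \setminus K_R) \leq \f{1}{R} \int \norm{C}\, d\nu_n \leq \f{M}{R}.
\end{equation}
Hence $\{\nu_n\}$ is tight, and by Prokhorov's theorem we can extract a weakly convergent subsequence $\nu_{n_k} \to \nu$ where $\nu$ is a Borel probability measure on $\widehat\Omega$.

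For the remaining properties: (a) For any $f \in C_b(\widehat\Omega)$, continuity of $\widehat S$ ensures $f \circ \widehat S \in C_b(\widehat\Omega)$, so
\begin{equation}
\int f\, d\nu - \int f \circ \widehat S\, d\nu = \lim_k \int f\, d\nu_{n_k} - \int f \circ \widehat S\, d\nu_{n_k} = \lim_k \f{1}{n_k}\left[ \int f\, d\mu_0 - \int f\, d(\widehat S^{n_k})_*\mu_0\right] = 0,
\end{equation}
giving $\widehat S$-invariance. (b) The projection property passes to the weak limit since any $g \in C_b(\Omega)$ lifts to a function in $C_b(\widehat\Omega)$. (c) For the integrability bound, since $C \mapsto \norm{C}$ is continuous and nonnegative, for each $R$ the truncation $\min(\norm{C}, R)$ is in $C_b(\widehat\Omega)$, so $\int \min(\norm{C}, R)\, d\nu = \lim_k \int \min(\norm{C}, R)\, d\nu_{n_k} \leq M$; monotone convergence as $R \to \infty$ yields $\int \norm{C}\, d\nu \leq M$.

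The main obstacle is the non-compactness of $\bbS\bbL(2,\bbC)$: without a uniform bound on $\int \norm{C}\, d\nu_n$, mass could escape to infinity under weak limits and the limit need not be a probability measure (nor have the right projection). This is precisely what Lemma~\ref{L5.3} supplies, and is the reason for passing through the Deift--Simon solutions and their $L^2(d\eta)$ control \eqref{5.5}.
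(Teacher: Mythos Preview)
Your proof is correct and follows essentially the same Krylov--Bogolyubov strategy as the paper: push forward an initial measure, take Ces\`aro averages, use Lemma~\ref{L5.3} for tightness, and extract an invariant weak limit. The only cosmetic difference is that the paper starts from $d\eta\otimes\mu_0$ with $\mu_0$ a smooth probability measure on $\bbS\bbL(2,\bbC)$ having all moments, whereas you use $d\eta\otimes\delta_I$; your choice is in fact simpler and works for exactly the same reason, and your verification of invariance, the projection property, and \eqref{5.29} via truncation is more explicit than the paper's.
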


\begin{proof} Pick any probability measure $\mu_0$ on $\bbS\bbL(2,\bbC)$ with $\int \norm{C}^k\, d\mu_0
(C) <\infty$ for all $k$. For example, one could take $d\mu_0(C)=Ne^{-\norm{C}^2} d\,\text{Haar}(C)$
where $N$ is a normalization constant. Let $\widehat S_*$ be induced on measures on $\widehat\Omega$
by $[\widehat S_*(\nu)](f) =\nu(f\circ\widehat S)$. Let
\begin{equation} \lb{5.30}
\nu_n = \widehat S^n_* (\eta\otimes \mu_0)
\end{equation}

Then the invariance of $\eta$ under $S_*$ implies the projection of $\nu_n$ is $\eta$ and
\begin{align}
\int \norm{C}\, d\nu_n
&= \int \norm{T_n(\omega)C}\, d\eta\otimes d\mu_0 \notag \\
&\leq \biggl( \int \norm{T_n(\omega)}\, d\eta\biggr) \biggl( \int \norm{C}\, d\mu_0\biggr) \lb{5.31}
\end{align}
which, by \eqref{5.21}, is uniformly bounded in $n$.

Let $\ti\nu_n$ be the Ces\`aro averages of $\nu_n$, that is,
\begin{equation} \lb{5.32}
\ti\nu_n =\f{1}{n}\, \sum_{j=0}^{n-1} \nu_j
\end{equation}
So, by \eqref{5.31},
\begin{equation} \lb{5.33}
\sup_n \int \norm{C}\, d\ti\nu_n <\infty
\end{equation}
so $\{\ti\nu_n\}$ are tight, that is,
\[
\lim_{K\to\infty}\, \sup_n\, \ti\nu_n \{C\mid \norm{C}\geq K\} \to 0
\]
which implies that $\ti\nu_n$ has a weak limit point in probability measures on $\wti\Omega$. This
weak limit point is invariant and, by \eqref{5.33}, it obeys \eqref{5.29}.
\end{proof}

\begin{lemma}\lb{L5.5}
Let $L<\infty$. Let
\begin{equation} \lb{5.33a}
\widehat\Omega_L =\{(\omega,C)\mid \norm{U_0(\omega)} < L, \, \norm{C} <L\}
\end{equation}
Then for any $\veps$, there is a $K$ so that for a.e.\ $(\omega,C)\in\widehat\Omega_L$,
\begin{equation} \lb{5.33b}
\lim_{n\to\infty}\, \f{1}{n}\, \sum_{\substack{ j\in B(K,\omega,C) \\ 0\leq j
\leq n-1}}
\norm{T_j(\omega)C}^2 \leq \veps
\end{equation}
where
\begin{equation} \lb{5.33c}
B(K,\omega,C) =\{j\mid\norm{T_j(\omega)C} \geq K\}
\end{equation}
\end{lemma}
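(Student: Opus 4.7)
The key identity driving the proof is \eqref{5.18} together with $\det U_0(\omega) = 1$, which forces $\|U_0(\omega)^{-1}\| = \|U_0(\omega)\|$ in the Hilbert--Schmidt norm (up to a harmless constant, since $\|M^{-1}\|_{HS} = \|M\|_{HS}$ for $2\times 2$ matrices of determinant $1$, as in \eqref{5.22}). Combined with \eqref{5.24}, this gives the clean pointwise bound
\begin{equation}
\|T_j(\omega) C\| \;\le\; \|U_j(\omega)\|\,\|U_0(\omega)^{-1}\|\,\|C\| \;=\; \|U_0(S^j\omega)\|\,\|U_0(\omega)\|\,\|C\|.
\end{equation}
So my plan is to reduce the estimate \eqref{5.33b} on sums of $\|T_j(\omega) C\|^2$ to a standard Birkhoff-type truncation estimate for the scalar $L^1$ function $\omega \mapsto \|U_0(\omega)\|^2$.

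Concretely, fix $(\omega, C) \in \widehat\Omega_L$, so $\|U_0(\omega)\| < L$ and $\|C\| < L$. The display above then yields $\|T_j(\omega) C\| \le L^2\,\|U_0(S^j\omega)\|$. Consequently, if $j \in B(K,\omega,C)$, i.e.\ $\|T_j(\omega) C\| \ge K$, then $\|U_0(S^j\omega)\| \ge K/L^2$. Therefore
\begin{equation}
\sum_{\substack{j \in B(K,\omega,C) \\ 0 \le j \le n-1}} \|T_j(\omega) C\|^2 \;\le\; L^4 \sum_{j=0}^{n-1} \|U_0(S^j\omega)\|^2\,\mathbf{1}_{\{\|U_0(S^j\omega)\| \ge K/L^2\}}.
\end{equation}

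Now I apply Birkhoff's ergodic theorem to the function $f_K(\omega) = \|U_0(\omega)\|^2\,\mathbf{1}_{\{\|U_0(\omega)\| \ge K/L^2\}}$. This is in $L^1(d\eta)$ by \eqref{5.5}, so for a.e.\ $\omega$,
\begin{equation}
\lim_{n\to\infty} \frac{1}{n} \sum_{j=0}^{n-1} f_K(S^j\omega) \;=\; \int f_K\,d\eta \;=\; \int_{\{\|U_0\| \ge K/L^2\}} \|U_0(\omega)\|^2\,d\eta(\omega).
\end{equation}
Since $\|U_0\|^2 \in L^1(d\eta)$, dominated convergence gives $\int f_K\,d\eta \to 0$ as $K \to \infty$. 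Pick $K$ so large that $L^4 \int f_K\,d\eta \le \veps$; then combining with the preceding display yields \eqref{5.33b} for every $(\omega,C) \in \widehat\Omega_L$ with $\omega$ in the full-measure set on which Birkhoff holds for $f_K$. (Note the choice of $K$ is uniform in $C$ with $\|C\| < L$, and the exceptional null set is in the $\omega$-variable only.)

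The proof is essentially an exercise once \eqref{5.18} and \eqref{5.4} are in hand; there is no genuine obstacle. The only minor point requiring attention is the use of a norm for which $\|M^{-1}\| = \|M\|$ on $\mathrm{SL}(2,\mathbb{C})$ matrices so that one can freely pass from $U_j U_0^{-1}$ back to $\|U_j\|\|U_0\|$; this is why the Hilbert--Schmidt norm was singled out in the remark preceding Lemma~\ref{L5.3}.
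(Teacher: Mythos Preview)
Your proof is correct and follows essentially the same route as the paper's: both use the bound $\|T_j(\omega)C\|\le L^2\|U_0(S^j\omega)\|$ on $\widehat\Omega_L$ (from \eqref{5.18}, \eqref{5.22}, \eqref{5.24}) to reduce \eqref{5.33b} to a Birkhoff average of the truncated $L^1$ function $\|U_0(\omega)\|^2\mathbf{1}_{\{\|U_0(\omega)\|\ge K/L^2\}}$, then invoke integrability of $\|U_0\|^2$ to make the tail small. The only cosmetic difference is that the paper phrases the truncation via the index set $\wti B(\wti K,\omega)=\{j:\|U_j(\omega)\|\ge\wti K\}$ and the observation $B(K,\omega,C)\subset\wti B(K/L^2,\omega)$, whereas you write the indicator directly; the content is identical.
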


\begin{proof} Since $U_0(\omega)\in L^2 (d\eta)$, we have
\begin{equation} \lb{5.33d}
\lim_{s\to\infty}\, \int_{\norm{U_0(\omega)}\geq s} \norm{U_0(\omega)}^2 \, d\eta(\omega) =0
\end{equation}
so for any $\delta >0$, there exists $s(\delta)$ so that the integral is less than $\delta$.

Let $\wti B(\wti K,\omega)$ be defined by
\begin{equation} \lb{5.33e}
\wti B(\wti K,\omega) = \{j\mid \norm{U_j (\omega)} \geq \wti K\}
\end{equation}
By the Birkhoff ergodic theorem and \eqref{5.24} for a.e.\ $\omega$,
\begin{equation} \lb{5.33f}
\lim_{n\to\infty}\, \f{1}{n} \sum_{\substack{ j\in\wti B(\wti K,\omega) \\ 0\leq j
\leq n-1}}
\norm{U_j(\omega)}^2=\int_{\norm{U_0(\omega)}\geq\wti K} \norm{U_0(\omega)}^2 \, d\eta\leq\delta
\end{equation}
if $\wti K\geq s(\delta)$.

Given $\veps$ and $L$, let $\delta = \veps/L^2$ and $K\geq L^2 s(\delta)$. Since
\begin{equation} \lb{5.33g}
\norm{T_j(\omega)C}\leq \norm{U_j(\omega)} L^2
\end{equation}
if $(\omega,C)\subset \Omega_L$,
\[
B(K,\omega,C) \subset \wti B\biggl( \f{K}{L^2}\, ,\omega\biggr)
\]
So, by \eqref{5.33f} and \eqref{5.33g},
\begin{equation} \lb{5.33h}
\lim_{n\to\infty}\, \f{1}{n} \sum_{\substack{ j\in B(K,\omega,C) \\ 0\leq j
\leq n-1}}
\norm{T_j(\omega) C}^2 \leq L^2 \delta =\veps
\end{equation}
which is \eqref{5.33a}.
\end{proof}

\begin{proof}[Proof of Theorem~\ref{T5.2}] Without loss, suppose $\norm{Q}\leq 1$. Define on
$\widehat\Omega$
\begin{equation} \lb{5.34}
f_n(\omega,C) =\f{1}{n}\, \sum_{j=0}^{n-1} C^t T_j(x,\omega)^t QT_j (x,\omega) C
\end{equation}
If we prove that this has a pointwise limit for $\nu$ a.e.\ $(\omega,C)$, we are done: since $\eta$ is
the projection of $\nu$, for $\eta$ a.e.\ $\omega$, there are some $C$ for which \eqref{5.34} has a
limit. But $C$ is invertible, so $(C^t)^{-1} f_n C^{-1}$ has a limit, that is, \eqref{5.20} does.

Notice that if
\begin{equation} \lb{5.35}
h(\omega,C) =C^t QC
\end{equation}
then $f_n(\omega,C)$ is a Ces\`aro average of $h(\widehat S^j(\omega,C))$, so we can almost use the ergodic
theorem except we only know a priori that $\int \norm{h(\omega,C)}^{1/2}\, d\nu <\infty$, not $\int
\norm{h(\omega,C)}\, d\nu <\infty$, so we need to use Lemma~\ref {L5.5}.

Fix $L$ and consider $(\omega,C)\in \widehat \Omega_L$. Let
\begin{equation} \lb{5.36}
h_K(\omega,C) = \begin{cases}
C^t QC &\text{if } \norm{C}\leq K \\
0 & \text{if } \norm{C} >K
\end{cases}
\end{equation}
Then, since $\norm{Q} \leq 1$,
\begin{equation} \lb{5.37}
\norm{h_K(\widehat S^j(\omega,C)) - h(\widehat S^j(\omega,C))} \leq
\begin{cases}
0 & \text{if } j\notin B(K,\omega,C) \\
\norm{T_j(\omega)C}^2 & \text{if } j\in B(K,\omega,C)
\end{cases}
\end{equation}
It follows that if
\begin{equation} \lb{5.38}
f_n^{(K)} (\omega,C) =\f{1}{n}\, \sum_{j=0}^{n-1} h_K (\widehat S^j(\omega,C))
\end{equation}
then
\[
\norm{f_n^{(K)}(\omega,C) -f_n(\omega,C)} \leq \text{sum on left side of \eqref{5.33b}}
\]
So, by Lemma~\ref{L5.5},
\begin{equation} \lb{5.39}
\limsup_{n\to\infty}\, \norm{f_n^{(K)}(\omega,C)-f_n(\omega,C)}\leq \veps
\end{equation}
if
\begin{equation} \lb{5.40}
K\geq K(\veps,L)
\end{equation}
given by the lemma.

For any finite $K$, $h_K$ is bounded, so the Birkhoff ergodic theorem and the invariance of $\nu$ imply, for
a.e.\ $(\omega,C)$, $\lim f_n^{(K)}(\omega,C)$ exists.  Thus (\ref {5.39})
and (\ref {5.40}) imply that $\lim f_n^{(K)}(\omega,C)$ forms a Cauchy
sequence as $K \to \infty$ (among, say, integer values), and that its limit
is also $\lim f_n(\omega,C)$, for a.e. $(\omega,C) \in \widehat
\Omega_L$.

\comm{
On $\widehat\Omega_L$,
\[
\norm{f_n(\omega,C)} \leq L^4 \, \f{1}{n}\, \sum_{j=0}^{n-1} \, \norm{U_j(\omega)}^2
\]
has the limit $L^4\int \norm{U_0(\omega)}\, d\eta(\omega)$ for a.e.\ $\omega$, so the $\norm{f_n(\omega,C)}$
are bounded as $n\to\infty$ for a.e.\ $(\omega,C)\in\widehat\Omega_L$.

For any finite $K$, $h_K$ is bounded, so the Birkhoff ergodic theorem and the invariance of $\nu$ imply, for
a.e.\ $(\omega,C)$, $\lim f_n^{(K)}(\omega,C)$ exists. It follows that any two limit points of $f_n(\omega,C)$
lie within $2\veps$ of each other. Since $\veps$ is arbitrary and $\norm{f_n(\omega,C)}$ is bounded, the limit
exists for a.e.\ $(\omega,C)\in\widehat\Omega_L$.
}

Since $L$ is arbitrary and $\nu (\widehat\Omega\setminus\widehat\Omega_L)\to 0$ on account of
$\int \norm{U_0(\omega)}^2\, d\nu<\infty$, we see that $f_n$ has a limit for
a.e.\ $\omega,C$.
\end{proof}

\section{Equality of the Local and Microlocal DOS} \lb{s6}

Our main goal in this section is to prove Theorem~5. We know from Theorem~4 that for a.e.\ $\omega\in\Omega$
and $x_0\in\Sigma_\ac$, we have that
\begin{equation} \lb{6.1}
\f{1}{n+1}\, K_n (x_0,x_0) \to k_\omega (x_0)
\end{equation}
some positive function. By Theorems~1 and 2, this implies that the spacing of zeros at a.e.\
Lebesgue point is
\begin{equation} \lb{6.2}
x_{j+1}^{(n)}(x_0) - x_j^{(n)}(x_0) \sim \f{1}{nw_\omega(x_0) k_{\omega}(x_0)}
\end{equation}

Thus, for fixed $K$ large, in an interval $(x_0-\f{K}{n}, x_0+\f{K}{n})$, the number of zeros is $2K w(x_0)
k(x_0)$. On the other hand, if $\rho_\infty(x_0)$ is the density of states, for a.e.\ $x_0$ in the
a.c.\ part of the support of $d\nu_\infty$, the number of zeros in $(x_0-\delta, x_0+\delta)$ is
approximately $2\delta n\rho(x_0)$. If $\delta$ were $K/n$, this would tell us that
\begin{equation} \lb{6.3}
w_\omega(x_0) k_\omega(x_0) =\rho_\infty (x_0)
\end{equation}
which is precisely \eqref{1.23}.

Of course, $\rho_\infty$ is defined by first taking $n\to\infty$ and then $\delta\downarrow 0$, so we
cannot set $\delta=K/n$, but \eqref{6.3} is an equality of a local density of zeros obtained by taking
intervals with $O(n)$ zeros as $n\to\infty$ and a microlocal individual spacing as in \eqref{6.2}.

So define
\begin{equation} \lb{6.4}
\rho_L (x_0,\omega) =w_\omega(x_0) k_\omega(x_0)
\end{equation}
the microlocal DOS. Notice that we have indicated an $\omega$-dependence of $\rho_L$ because, at this point,
we have not proven $\omega$-independence. $\omega$-independence often comes from the ergodic theorem---we
determined the existence of $k_\omega(x_0)$ using the ergodic theorem, but unlike for $\rho_\infty$, the
underlying measure was only invariant, not ergodic, and indeed, $k_\omega$, the object we controlled is
{\it not\/} $\omega$-independent.

Of course, once we prove $\rho_L=\rho_\infty$, $\rho_L$ will be proven $\omega$-independent, but we will,
in fact, go the other way: we first prove that $\rho_L$ is $\omega$-independent, use that to show that
if $u$ is the Deift--Simon wave function, then the average of $u^2$ (not $\abs{u}^2$) is zero, and
use that to prove that $\rho_L=\rho_\infty$.

\begin{theorem}\lb{T6.1} Suppose that $J_\omega$ is a family of ergodic Jacobi matrices. Let $\rho_L(x,\omega)$
be given by \eqref{6.1}/\eqref{6.4} for $x\in\Sigma_\ac$, $\omega\in\Omega$. Then for a.e.\ $x\in\Sigma_\ac$,
$\rho_L(x,\omega)$ is a.e.\ $\omega$-independent.
\end{theorem}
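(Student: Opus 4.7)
The plan is to exploit ergodicity of $(\Omega,S,\eta)$ by first showing that $\rho_L(x,\omega)$ is a.e. invariant under the shift $\omega \mapsto S\omega$, and then invoking ergodicity of $S$ to conclude that $\rho_L(x,\cdot)$ is a.e. constant for a.e.\ $x \in \Sigma_\ac$.

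To set this up, I compute how each factor $w_\omega$ and $k_\omega$ transforms under a single shift. The $m$-function recursion (5.9) gives, after taking imaginary parts at $z=x+i0$,
\[
\Im m(x+i0,S\omega) = \frac{\Im m(x+i0,\omega)}{a_1(\omega)^2\,|m(x+i0,\omega)|^2},
\]
so $w_{S\omega}(x) = w_\omega(x)/[a_1(\omega)^2|m(x+i0,\omega)|^2]$. For the CD factor, (1.37) gives $\tilde p_j(x,\omega) = p_j(x,S\omega) = -a_1(\omega)\,q_{j+1}(x,\omega)$, so by Theorem~4 applied to the second-kind polynomials, $k_{S\omega}(x) = a_1(\omega)^2\,l_\omega(x)$ with $l_\omega(x):=\lim_n(n+1)^{-1}\sum_{j=0}^n q_j(x,\omega)^2$. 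Combining these, shift-invariance of $\rho_L$ reduces to the identity
\[
l_\omega(x) = |m(x+i0,\omega)|^2\, k_\omega(x) \quad\text{for a.e.}~(x,\omega)\in\Sigma_\ac\times\Omega.
\]

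To prove this identity I use the Deift--Simon wave function as the bridge. A direct verification (both sides solve the same three-term recurrence and match at $n=0,1$ under the normalization (5.12)--(5.13)) shows
\[
u_{n+1}^+(x,\omega) = \frac{q_n(x,\omega) - m(x+i0,\omega)\,p_n(x,\omega)}{[\Im m(x+i0,\omega)]^{1/2}}.
\]
Taking modulus-squared and using (5.4), $|u_{j+1}^+(x,\omega)|^2 = |u_0^+(x,S^{j+1}\omega)|^2$, so Birkhoff applied to the $L^1(\eta)$ function $\omega\mapsto |u_0^+(x,\omega)|^2$ (via (5.5)) yields that $I(x) := \int|u_0^+(x,\omega')|^2\,d\eta(\omega')$ is the a.e.\ limit and is $\omega$-independent. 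Expanding $|q_j-mp_j|^2$ and summing produces the single relation
$l_\omega - 2\Re m\cdot h_\omega + |m|^2 k_\omega = \Im m\cdot I(x)$, where $h_\omega := \lim(n+1)^{-1}\sum p_jq_j$ (whose existence is ensured by Theorem~5.2).

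For a second equation, I use the cocycle structure that the shift induces on $u^+$. A short computation with (5.12)--(5.13) shows $c(x,\omega):=u_1^+(x,\omega)/u_0^+(x,S\omega)=-m(x+i0,\omega)/|m(x+i0,\omega)|$, so $|c|=1$ and $\bar m\,c^2 = m$. Since both $u_{n+1}^+(x,\omega)$ and $c(x,\omega)\,u_n^+(x,S\omega)$ are $\ell^2$-at-$+\infty$ solutions of the Jacobi recursion at the shifted parameters with the same $n=0$ normalization, they agree, and the corresponding Ces\`aro limit $C(x,\omega) := \lim(n+1)^{-1}\sum_{j=0}^n (u_{j+1}^+)^2$ (existence from Theorem~5.2) obeys the cocycle relation
\[
\bar m(\omega)\,C(x,\omega) = m(\omega)\,C(x,S\omega).
\]
A direct expansion gives $\Im(\bar m\,C) = |m|^2 k_\omega - l_\omega$, so our identity becomes the assertion that $\bar m(\omega)\,C(x,\omega)$ is real. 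The hard step, which I expect to be the main obstacle, is extracting this reality from the cocycle relation: the strategy is to iterate the relation and apply Birkhoff to the phase cocycle $\arg m(x+i0,S^k\omega)$ (which on $\Sigma_\ac$ takes values in $(0,\pi)$, hence has an ergodic average $\alpha(x)$ that generically lies outside $\pi\mathbb{Z}$), exploiting the resulting equidistribution together with the already-established $S$-invariance of $|C|$ to force $\Im(\bar m\,C)=0$. Ergodicity of $(\Omega,S,\eta)$ applied to the $S$-invariant function $\omega\mapsto \rho_L(x,\omega)$ then completes the proof.
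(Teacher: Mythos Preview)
Your reduction is correct and illuminating: you show that $S$-invariance of $\rho_L$ is equivalent to the identity $l_\omega=|m|^2 k_\omega$, and via the Deift--Simon solution this becomes $\Im(\bar m(\omega)\,C(\omega))=0$, where $C(\omega)=\Av_\omega((u_j^+)^2)$. The cocycle relation $C(\omega)=c(\omega)^2\,C(S\omega)$ with $|c|=1$ is also correct, and it does give $|C|$ a.e.\ constant. But the ``hard step'' is a real gap, not just a hard step. Writing $C=|C|e^{i\psi}$ (when $|C|>0$), the cocycle says only that $\psi(S\omega)-\psi(\omega)\equiv 2\varphi(\omega)\pmod{2\pi}$, a cohomological equation that can perfectly well have measurable solutions $\psi$ bearing no relation to $\arg m$. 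Birkhoff on $\varphi$ yields convergence of $\tfrac{1}{n}s_n(\omega)$ to a constant, not equidistribution of $s_n(\omega)\bmod \pi$, so the appeal to ``equidistribution'' is unjustified; and even genuine equidistribution of $s_n$ would not force $\psi\equiv\arg m\pmod\pi$. In fact, the statement you are trying to extract here---that $C$ (or its relevant component) vanishes---is precisely the content of the paper's Theorem~\ref{T6.3}, and the paper's proof of \emph{that} theorem uses Theorem~\ref{T6.1} as input. So your route, as written, is circular with respect to the paper's architecture, and you have not supplied an independent argument to close it.

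By contrast, the paper's proof of Theorem~\ref{T6.1} avoids all of this machinery. It uses only two facts already in hand at this point: (i) the classical interlacing of the zeros of $p_n(\,\cdot\,,\omega)$ and $p_{n-1}(\,\cdot\,,S\omega)$, and (ii) the quasi-clock spacing \eqref{6.2} established via Theorems~1--4. Interlacing bounds the difference in zero counts on any interval $[x_0-\tfrac{A}{n},\,x_0+\tfrac{A}{n}]$ by $2$, while \eqref{6.2} says those counts are asymptotically $2A\rho_L(x_0,\omega)$ and $2A\rho_L(x_0,S\omega)$; taking $A$ large forces $\rho_L(x_0,\omega)=\rho_L(x_0,S\omega)$, and ergodicity finishes. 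If you want to salvage your computation, note that it \emph{does} yield the clean identity $\Re C(\omega)=I(x)-2\pi\rho_L(x,\omega)$, which, once Theorem~\ref{T6.1} is known by the interlacing argument, immediately gives the $\omega$-independence of $\Re C$---a pleasant check, but not a replacement for the proof.
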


\begin{proof} Since $\rho_L (x,\omega)$ is jointly measurable for $(x,\omega)\in\Sigma_\ac\times\Omega$,
$\rho_L(x,\dott)$ is measurable for a.e.\ $x$. Since $S$ is ergodic, it suffices to prove that $\rho_L
(x,S\omega)=\rho_L(x,\omega)$ for a.e.\ $(x,\omega)$.

Let $p_n (x,\omega)$ be the OPs for $J_\omega$. Then the zeros of $p_{n-1}(x,S\omega)$ and $p_n(x,\omega)$
interlace. It follows for any interval $[x_0-\f{A}{n}, x_0+\f{A}{n}] =I_{n,A}(x_0)$,
\begin{equation} \lb{6.5}
\abs{\#\text{ of zeros of } p_n(x,\omega) \text{ in } I_{n,A}(x_0) - \# \text{ of zeros of } p_{n-1}
(x,S\omega) \text{ in } I_{n,A}(x_0)} \leq 2
\end{equation}
If $\rho_L(x_0,S\omega)\neq\rho_L(x_0,\omega)$ and $A=k\rho_L(x_0,\omega)^{-1}$ with $k$ large, it is easy to
get a contradiction between \eqref{6.5} and \eqref{6.2}. Thus, $\rho_L(x,\omega)=\rho_L(x,S\omega)$
as claimed.
\end{proof}

Next, we need a connection between $\rho_L$ and $u$. Recall (see \eqref{5.14})
\begin{equation} \lb{6.6}
p_n(x,\omega) = \f{\Ima u_{n+1}^+ (x,\omega)}{\Ima u_1^+(x,\omega)}
\end{equation}
and, by \eqref{5.13},
\begin{equation} \lb{6.7}
\Ima u_1^+(x,\omega) = -[\Ima m(x+i0,\omega)]^{1/2}
\end{equation}
and, by \eqref{5.10}, for a.e.\ $x\in\Sigma_\ac$,
\begin{equation} \lb{6.8}
\Ima m(x+i0,\omega)=\pi w_\omega(x)
\end{equation}

Thus, if we define
\begin{equation} \lb{6.9}
\Av_\omega(f_j(\omega)) \equiv \lim_{n\to\infty}\, \f{1}{n}\, \sum_{j=1}^n f_j(\omega)
\end{equation}
then
\begin{equation} \lb{6.10}
\rho_L(x,\omega) = \f{1}{\pi}\, \Av_\omega ([\Ima u_j^+ (x,\omega)]^2)
\end{equation}

Note that $\Ima u_j^+(x,\omega)$ is not $\Ima u_0^+ (x,S^j \omega)$, so we cannot write \eqref{6.10}
as an integral. In fact, the $\omega$-independence of the right side of \eqref{6.10} (because of
$\omega$-independence of the left side) will have important consequences.

To see where we are heading, we note the following result of Kotani \cite{Kot97}; see Damanik
\cite[Thm.~5]{Dam}:

\begin{theorem}[Kotani \cite{Kot97}]\lb{t6.2} For a.e.\ $x\in\Sigma_\ac$,
\begin{equation} \lb{6.11}
\rho_\infty(x) = \f{1}{2\pi} \int \abs{u_0^+(x,\omega)}^2\, d\eta(x)
\end{equation}
\end{theorem}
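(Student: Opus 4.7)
The plan is to derive the formula by writing $\rho_\infty(E)$ as the $\eta$-average of the imaginary part of the \emph{two-sided} diagonal Green's function, then exploit the complex-conjugate structure of the Deift--Simon solutions on $\Sigma_\ac$ (from Theorem~\ref{T5.1}) to collapse this Green's function to a multiple of $|u_0^+|^2$.

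First I would recall the identification of the density of states. The DOS of the ergodic half-line family $J_\omega$ coincides with the IDS of the canonical two-sided extension $H_\omega^{(2)}$ obtained by setting $a_n(\omega)=A(S^{n-1}\omega)$, $b_n(\omega)=B(S^{n-1}\omega)$ for all $n\in\bbZ$, and by stationarity plus the Birkhoff theorem applied to the trace of spectral projections,
\begin{equation*}
\rho_\infty(E) = \f{1}{\pi}\, \int \Ima G_\omega(E+i0;\,0,0)\, d\eta(\omega),\qquad E\in \Sigma_\ac,
\end{equation*}
where $G_\omega(z;0,0)=\langle\delta_0,(H_\omega^{(2)}-z)^{-1}\delta_0\rangle$. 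Next, using the two Weyl $\ell^2$-solutions $\ti u^\pm(z)$ (the one at $+\infty$ giving the paper's $m(z,\omega)\equiv m^+(z,\omega)$, and its mirror at $-\infty$ giving an analogous $m^-(z,\omega)$) together with variation of parameters, one gets the standard representation
\begin{equation*}
G_\omega(z;0,0) = \f{-1}{(z-b_0) + a_0^2\, m^+(z,\omega) + a_{-1}^2\, m^-(z,\omega)}.
\end{equation*}

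Now I would use Kotani theory in the form provided by Theorem~\ref{T5.1}. On $\Sigma_\ac$, the Weyl solutions from the upper half-plane converge, with proper normalization, to the Deift--Simon solutions $u^+$ and $u^-=\overline{u^+}$ as $\Ima z\downarrow 0$; this gives
\begin{equation*}
m^+(E+i0,\omega) = -\f{u_1^+}{a_0 u_0^+},\qquad m^-(E+i0,\omega) = -\f{\overline{u_{-1}^+}}{a_{-1}\,\overline{u_0^+}}.
\end{equation*}
The Jacobi recursion at $n=0$ reads $a_0 u_1^+ + (b_0-E)u_0^+ + a_{-1}u_{-1}^+=0$; dividing by $u_0^+$ and taking complex conjugates yields the \emph{reflection identity}
\begin{equation*}
a_{-1}^2\, m^-(E+i0,\omega) \;=\; -(E-b_0) \;-\; \overline{a_0^2\, m^+(E+i0,\omega)}.
\end{equation*}
Substituting this into the denominator of $G_\omega(E+i0;0,0)$, the real parts cancel and one is left with $2i\, a_0^2\,\Ima m^+(E+i0,\omega)$. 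Thus
\begin{equation*}
\Ima G_\omega(E+i0;0,0) = \f{1}{2 a_0^2\,\Ima m^+(E+i0,\omega)} = \tfrac{1}{2}\,\abs{u_0^+(E,\omega)}^2,
\end{equation*}
where the second equality is just the normalization \eqref{5.12}. Integrating against $d\eta(\omega)$ then gives the formula \eqref{6.11}.

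The main obstacle is the rigorous justification of the first and third ingredients: the identity $\rho_\infty(E)=\pi^{-1}\int \Ima G_\omega(E+i0;0,0)\,d\eta$ in the ergodic half-line setting (which needs comparison between the zero-counting measure of OPs and the truncated trace, followed by the ergodic theorem), and the convergence of the Weyl $\ell^2$-solutions to $u^\pm$ for a.e.\ pairs $(E,\omega)\in\Sigma_\ac\times\Omega$. Both are by now standard parts of Kotani theory (they underlie Theorem~\ref{T5.1} itself), but each requires nontrivial control of boundary values of Herglotz functions; once they are in place, the algebraic reflection step above is elementary.
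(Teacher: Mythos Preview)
Your proposal is correct and follows essentially the same route the paper takes. The paper does not give a self-contained proof of this theorem but cites Kotani and explains in Remark~2 that Kotani's result is $\pi\rho_\infty(x)=\int \Ima G_\omega(0,0;x+i0)\,d\eta$, and that the reflectionless property of $G_\omega$ on $\Sigma_\ac$ forces $\Ima G_\omega(0,0;x+i0)=[2a_0^2\Ima m(x+i0,\omega)]^{-1}=\tfrac12|u_0^+|^2$ via \eqref{5.12}; your derivation of the reflection identity from the recursion at $n=0$ and the conjugate structure $u^-=\overline{u^+}$ is exactly the computation underlying the paper's one-line assertion that ``$G_\omega$ is pure imaginary.''
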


\begin{remarks} 1. \cite{Kot97,Dam} treat $a_n\equiv 1$, but it is easy to accommodate general $a_n$.

\smallskip
2. Kotani's theorem is not stated in this form but rather as (see eqn.~(22) in Damanik \cite{Dam})
\begin{equation} \lb{6.12}
\pi\rho_\infty (x) = \int \Ima G_\omega(0,0; x+i0)\, d\eta(\omega)
\end{equation}
where $G_\omega$ is the whole-line Green's function. Because $G_\omega$ is reflectionless, $G_\omega$
is pure imaginary and
\begin{align}
\Ima (G_\omega (0,0; x+i0)) &= [2a_0^2 \Ima m(x+i0,\omega)]^{-1} \lb{6.13} \\
&= \tfrac12\, \abs{u_0^+ (x,\omega)}^2 \lb{6.14}
\end{align}
by \eqref{5.12}.
\end{remarks}

Thus, the key to proving $\rho_L =\rho_\infty$ will be to show that
\begin{equation} \lb{6.15}
\Av_\omega ([\Ima u_j^+ (x,\omega)]^2) = \Av_\omega([\Real u_j^+ (x,\omega)]^2)
\end{equation}
Note that \eqref{6.10} includes that the $\Av_\omega ([\Ima u_j^+]^2)$ exists and, by the ergodic theorem,
$\Av_\omega(\abs{u_j^+}^2)$ exists, so we know for a.e.\ $(x,\omega)\in\Sigma_\ac\times\Omega$ that
$\Av_\omega([\Real u_j^+ (x,\omega)]^2)$ exists. We are heading towards:

\begin{theorem}\lb{T6.3} Suppose $x\in\Sigma_\ac$ is such that $\rho_L(x,\omega)$ exists for a.e.\ $\omega$
and is $\omega$-independent, and that
\begin{equation} \lb{6.16}
\nu_\infty ((-\infty,x])\neq \tfrac12
\end{equation}
Then for a.e.\ $\omega$,
\begin{equation} \lb{6.17}
\Av_\omega((u_j^+(x,\omega))^2) =0
\end{equation}
\end{theorem}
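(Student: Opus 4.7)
The plan is to combine the shift action of $S$ with the $\omega$-independence of $\rho_L$ (Theorem~\ref{T6.1}) to derive a functional equation for any subsequential Ces\`aro limit $F(\omega)$ of $\frac{1}{n}\sum_{j=1}^n (u_j^+(x,\omega))^2$, and then use the hypothesis $\nu_\infty((-\infty,x])\ne 1/2$ through a Kotani-type formula for the integrated DOS to force $F\equiv 0$.

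First, I would establish a shift identity for $u^+$. Both sequences $\{u_{j+1}^+(x,\omega)\}_{j\ge 0}$ and $\{u_j^+(x,S\omega)\}_{j\ge 0}$ are $\ell^2$-at-$+\infty$ solutions of the Jacobi recursion for the stripped operator $J_{S\omega}$, so by uniqueness they differ by a scalar: $u_{j+1}^+(x,\omega) = c(x,\omega)\, u_j^+(x,S\omega)$, and property (iii) of Theorem~\ref{T5.1} gives $|c|=1$, so $c = e^{i\phi(x,\omega)}$. Next, using $\Real((u_j^+)^2) = |u_j^+|^2 - 2(\Ima u_j^+)^2$ together with Birkhoff applied to the ergodic variable $|u_0^+(x,\cdot)|^2$ and the hypothesis that $\pi\rho_L = \Av((\Ima u_j^+)^2)$ is $\omega$-independent, I would conclude that $\Real F_n(\omega)$ converges a.s.\ to an $\omega$-independent constant $f_0$.

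Since $|F_n|\le \frac{1}{n}\sum |u_j^+|^2$ is bounded, subsequential limits $F(\omega) = f_0 + ih(\omega)$ exist. Squaring the shift identity and averaging (the $j\mapsto j+1$ shift is absorbed in the Ces\`aro limit) yields
\begin{equation*}
F(S\omega) = e^{-2i\phi(x,\omega)}\, F(\omega).
\end{equation*}
Comparing real parts with $\Real F(S\omega) = f_0$ gives $h\cos\phi = f_0\sin\phi$ on $\{\sin\phi\ne 0\}$. When $f_0\ne 0$, this forces $h = f_0\tan\phi$; feeding this back into the imaginary part yields $\tan\phi(S\omega) = -\tan\phi(\omega)$, whence $\tan^2\phi$ is $S$-invariant and thus constant by ergodicity, and $\phi\bmod\pi$ takes two values symmetric about $\pi/2$ with equal probability $1/2$, so $\bar\phi \equiv \pi/2 \bmod\pi$. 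When $f_0 = 0$, the identity reduces to $h\sin 2\phi = 0$ a.s.; the $S$-invariance of $\{h=0\}$ and ergodicity then give either $h\equiv 0$ (so $F=0$) or $\phi$ takes values in $\{0,\pi/2\}\bmod\pi$ a.s., forcing $\bar\phi \in \{0,\pi/2\} \bmod\pi$.

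The main obstacle is translating the conclusion $\bar\phi \in \{0,\pi/2\}\bmod\pi$ into a statement about the IDS. Using \eqref{5.12}--\eqref{5.13}, one computes $e^{i\phi(x,\omega)} = u_1^+(x,\omega)/u_0^+(x,S\omega)$, so $\phi(x,\omega) \equiv \arg m(x+i0,\omega)\bmod\pi$. The standard rotation-number formula for ergodic Jacobi matrices (derivable from the reflectionless formulas \eqref{6.12}--\eqref{6.14} and Kotani theory) identifies $\int \arg m(x+i0,\omega)\,d\eta(\omega) \equiv \pi\nu_\infty((-\infty,x])\bmod\pi$, so the constraint on $\bar\phi$ forces $\nu_\infty((-\infty,x])\in\{0,1/2,1\}$. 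Since a.e.\ $x\in\Sigma_{\ac}$ lies in the interior of the spectrum (so $\nu_\infty((-\infty,x])\notin\{0,1\}$) and the hypothesis rules out $1/2$, we obtain a contradiction. Hence every subsequential limit vanishes and $\Av_\omega((u_j^+(x,\omega))^2) = 0$ as required.
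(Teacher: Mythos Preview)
Your argument is essentially correct and runs parallel to the paper's, though organized differently. Both proofs rest on the same three ingredients: the shift covariance $u_{j+1}^+(x,\omega)=e^{-i\varphi(\omega)}u_j^+(x,S\omega)$, the $\omega$-independence of $\rho_L$ (and hence of $\Real F$, via $\Real(u^+)^2=|u^+|^2-2(\Ima u^+)^2$), and the rotation-number identification $\int\varphi\,d\eta=\pi\,\nu_\infty((-\infty,x])$. The paper applies the shift by $S^n$ for every $n$, obtaining the family of constraints $\sin s_n\,[\sin s_n\,R+\cos s_n\,I]=0$, and then splits into Case~1 ($s_{2n}\equiv n\pi$ for all $n$, which forces $\nu_\infty=\tfrac12$) versus Case~2 (some $s_{2n}\neq n\pi$, where two consecutive equations form a $2\times2$ system with nonzero determinant, giving $R=I=0$). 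You instead iterate once to get the cocycle relation $F(S\omega)=e^{-2i\phi(\omega)}F(\omega)$, and use ergodicity of $S$ to show that any nonzero solution forces $\tan^2\phi$ to be constant with $\bar\varphi=\pi/2$. Your route trades the paper's explicit linear algebra for a softer ergodicity argument; both reach the contradiction with \eqref{6.16} through the same Johnson--Moser/Sturm identification.

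Two points to tighten. First, the ``$\ell^2$-at-$+\infty$'' justification for $u_{j+1}^+(x,\omega)=c(x,\omega)\,u_j^+(x,S\omega)$ is not valid at real $x\in\Sigma_\ac$, where $u^+$ is oscillatory rather than $\ell^2$; you should argue for $\Ima z>0$ (where uniqueness of the decaying solution is genuine) and then pass to boundary values, which is exactly how the paper obtains \eqref{6.22}--\eqref{6.23}. Second, you do not need subsequential limits: by Theorem~\ref{T5.2} (writing $u_{j+1}^+$ as a fixed linear combination of $p_j$ and $q_j$) the full limit $F(\omega)=\Av_\omega((u_j^+)^2)$ exists a.e., so you can work directly with it and avoid the measurability issues that an honest subsequence argument would raise. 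As a minor cosmetic matter, in your $f_0=0$ case the possibility $\phi\equiv 0\bmod\pi$ cannot occur (since $\varphi\in(0,\pi)$ strictly), so only $\phi\equiv\pi/2$ survives and the values $\nu_\infty\in\{0,1\}$ never actually arise; you need not invoke ``interior of the spectrum'' to exclude them.
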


\begin{proof}[Proof of Theorem~5 given Theorem~\ref{T6.3}] \eqref{6.16} fails at at most a single $x$ in
$\Sigma_\ac$, so \eqref{6.17} holds for a.e.\ $(x,\omega)\in\Sigma_\ac\times\Omega$. Its real part implies
\eqref{6.15}, and so for a.e.\ $(x,\omega)$,
\begin{align}
\Av_\omega([\Ima u_j^+ (x,\omega)]^2) &= \tfrac12\, \Av_\omega(\abs{u_j^+ (x,\omega)}^2) \lb{6.18} \\
&= \tfrac12 \int \abs{u_0^+ (x,\omega)}^2\, d\eta(x) \lb{6.19}
\end{align}
by the ergodic theorem. By \eqref{6.10}, \eqref{6.11}, and the definition \eqref{6.1}/\eqref{6.4} of $\rho_L$,
we see that the limit in \eqref{1.45} is $\rho_\infty (x)/w_\omega(x)$.
\end{proof}

\begin{proof}[Proof of Theorem~\ref{T6.3}] Fix $x\in\Sigma_\ac$ (at each stage, we work up to sets of
Lebesgue measure $0$). Define $\varphi(\omega)\in (0,2\pi)$ by
\begin{equation} \lb{6.20}
\Arg (-m(x+i0,\omega)) = -\varphi(\omega)
\end{equation}
Then $\varphi(\omega)\in (0,\pi)$ by $\Ima m>0$. Let ($\varphi$ and $s_n$ also depend on $x$)
\begin{equation} \lb{6.21}
s_n(\omega) =\sum_{j=1}^n \varphi (S^{j-1}\omega)
\end{equation}
Then by \eqref{5.8} and \eqref{5.4},
\begin{equation} \lb{6.22}
u_n^+ (x,\omega) =e^{-is_n(\omega)} u_0^+ (x,S^n \omega)
\end{equation}
and
\begin{equation} \lb{6.23}
u_{n+j}^+ (x,\omega) = e^{-is_n(\omega)} u_j^+ (x,S^n \omega)
\end{equation}
It follows that for each fixed $n$,
\begin{equation} \lb{6.24}
\Av_\omega ((\Ima u_j^+((x,S^n\omega))^2) =\Av_\omega ((\Ima e^{is_n(\omega)} u_j^+ (x,\omega))^2)
\end{equation}

If $s,x,y$ are real,
\begin{align}
(\Ima (e^{is}(x+iy)))^2 &= (x\sin s + y\cos s)^2 \notag \\
&= y^2 + (\sin^2 s)(x^2 -y^2) + xy (\sin 2s) \lb{6.25}
\end{align}
and thus,
\begin{equation} \lb{6.26}
\begin{split}
\text{LHS of \eqref{6.24}} &= \Av_\omega ([\Ima (u_j^+ (x,\omega))]^2) + \sin^2 s_n(\omega) R(\omega)   \\
& \qquad\qquad +\tfrac12\, \sin (2s_n(\omega)) I(\omega)
\end{split}
\end{equation}
where
\begin{align}
R(\omega) &= \Av_\omega (\Real ((u_j^+ (x,\omega))^2)) \lb{6.27} \\
I(\omega) &= \Av_\omega (\Ima ((u_j^+ (x,\omega))^2)) \lb{6.28}
\end{align}
(all such averages having been previously shown to exist).

\comm{
We already know that average in $R(\omega)$ exists, and if the average
on the LHS of \eqref{6.25} exists,
either $I(\omega)$ exists or $\sin (2s_n(\omega))=0$.
}

We know that for a.e.\ $(x,\omega)$, for $n=0,1,2, \dots$, LHS of \eqref{6.24} exists and is $n$-independent
(and equal to $\rho_L (x,\omega)$). For such $(x,\omega)$, \eqref{6.26} implies that for all $n$,
\begin{equation} \lb{6.29}
\sin s_n(\omega) [\sin s_n(\omega) R(\omega) + \cos s_n(\omega) I(\omega)] =0
\end{equation}

We want to consider two cases:
\begin{SL}
\item[{\it Case 1.}] For a positive measure set of $\omega$,
\begin{equation} \lb{6.30}
s_2(\omega) = \pi \qquad s_4(\omega) =2\pi \qquad s_6(\omega) = 3\pi \qquad \dots
\end{equation}

\item[{\it Case 2.}] For a.e.\ $\omega$, there is an $n(\omega)$ so
\begin{equation} \lb{6.31}
s_{2j}(\omega) = j\pi \quad (j=1, \dots, n-1) \qquad s_{2n}(\omega) \neq n\pi
\end{equation}
\end{SL}

In Case~1, for such $\omega$, we have $\frac {s_n(\omega)} {n \pi} \to
\frac {1} {2}$.  It follows by standard Sturm oscillation theory
(see, e.g., \cite{JoMo}) that $\frac {s_n(\omega)} {n \pi} \to
\nu_\infty ((-\infty, x])$ for almost every $\omega$.
Thus, the hypothesis \eqref{6.16}
eliminates Case~1.

\comm{
$\Ima (u_{2j}^+ (x,\omega)) =0$, that is,
\begin{equation} \lb{6.32}
p_1 (x,\omega) = p_3 (x,\omega) = \cdots = 0
\end{equation}
So the overall density of zeros of $\rho$ is $\f12$ and, by standard Sturm oscillation arguments
(see, e.g., \cite{JoMo}) and the fact that this holds for a positive measure set of $\omega$'s,
we have $\rho_\infty ((-\infty, x])=\f12$. Thus, the hypothesis \eqref{6.16}
eliminates Case~1.\marginpar{I found it difficult to parse the statement
about overal density of zeros of $\rho$.  Should we talk perhaps
about sign changes in the sequence $p_i(x,\omega)$ instead?}
}

For Case~2, suppose first that $n$ is odd, so $s_{2(n-1)}(\omega)$ is a multiple of $2\pi$ and \eqref{6.21} for
$2n-1$ and $2n$ imply
\begin{gather}
\sin(\varphi_{2n-1}) [\sin (\varphi_{2n-1}) R+\cos(\varphi_{2n-1}) I] = 0 \lb{6.33} \\
\sin(\varphi_{2n-1} + \varphi_{2n}) [\sin (\varphi_{2n-1} + \varphi_{2n}) R + \cos (\varphi_{2n-1}
+\varphi_{2n}) I]  = 0 \lb{6.34}
\end{gather}
Since $\varphi_{2n-1}\in (0,\pi)$, $\sin(\varphi_{2n-1}) \neq 0$ and since $\varphi_{2n-1} + \varphi_{2n}
\in (0,2\pi)\setminus \{\pi\}$, (for if it equals $\pi$, then $s_{2n} =n\pi$!), $\sin(\varphi_{2n-1}
+ \varphi_{2n}) \neq 0$.

The determinant of equations \eqref{6.33}/\eqref{6.34} is
\begin{equation} \lb{6.35}
-\sin(\varphi_{2n-1}) \sin(\varphi_{2n-1} + \varphi_{2n}) \sin (\varphi_{2n}) \neq 0
\end{equation}
since
\begin{equation} \lb{6.36}
\sin(A) \cos(B) -\sin(B) \cos(A) =\sin(A-B)
\end{equation}
Here $\neq 0$ in \eqref{6.35} comes from $\varphi_{2n}\in (0,\pi)$, so $\sin(\varphi_{2n})\neq 0$.

The nonzero determinant means that \eqref{6.33}/\eqref{6.34} $\Rightarrow I=R=0$, that is, $\Av_\omega
((u_j^+)^2) =0$ for a.e.\ $\omega$. If $n$ is even, $s_{2(n-1)}(\omega)$ is an odd multiple of $\pi$
and all equations pick up minus signs, so the argument is unchanged.
\end{proof}

\section{Assorted Remarks} \lb{s7}

1. \ We have proven for general ergodic Jacobi matrices that for a.e.\ $(x,\omega)\in\Sigma_\ac\times\Omega$,
\begin{equation} \lb{7.1}
\f{1}{n+1}\, K_n(x,x;\omega) \to \f{\rho_\infty(x)}{w_\omega (x)}
\end{equation}
Here $\rho_\infty$ is the Radon--Nikodym derivative of the a.c.\ part of $d\rho_\infty$. Based on
\cite{MNT91,Tot}, where results of this type are proven for regular measures, one expects
\begin{equation} \lb{7.2}
\rho_\infty(x)=\rho_\fre(x)
\end{equation}
Here $\fre$ is the essential spectrum of $J_\omega$ and $\rho_\fre$ its equilibrium measure. In
\cite{EqMC}, it is proven (see Thm.~1.15 there)

\begin{theorem}\lb{T7.1} If $\Sigma_\ac$ is not empty, then \eqref{7.2} holds if and only if, for
$\rho_\fre$ a.e.\ $x$,
\begin{equation} \lb{7.3}
\gamma(x)=0
\end{equation}
\end{theorem}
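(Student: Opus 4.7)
The plan is to combine the Thouless formula for Jacobi matrices with the potential-theoretic characterization of the equilibrium measure. Writing $\Phi^\mu(x) := \int \log|x-y|\, d\mu(y)$ for the logarithmic potential and $A_\infty := \lim_n (a_1 \cdots a_n)^{1/n}$, the Thouless formula asserts
\begin{equation*}
\gamma(z) = \Phi^{\nu_\infty}(z) - \log A_\infty,
\end{equation*}
while Frostman's theorem gives $\Phi^{\rho_\fre}(x) \geq \log\mathrm{cap}(\fre)$ everywhere with equality quasi-everywhere on $\fre$; moreover, $\rho_\fre$ is the unique probability measure supported on $\fre$ realizing this constancy on $\fre$ (up to a polar exceptional set).

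For the direction $(\Rightarrow)$: if $d\nu_\infty = d\rho_\fre$, then Thouless immediately gives $\gamma(x) = \log\mathrm{cap}(\fre) - \log A_\infty$ for quasi-every $x \in \fre$. The hypothesis $\Sigma_\ac \neq \emptyset$ combined with Kotani's theorem yields $\gamma = 0$ on a subset of $\fre$ of positive Lebesgue measure, hence of positive capacity, so this constant value of $\gamma$ must be zero. Since $\rho_\fre$ does not charge polar sets, $\gamma = 0$ for $\rho_\fre$-a.e.\ $x$.

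For the direction $(\Leftarrow)$: from $\gamma = 0$ for $\rho_\fre$-a.e.\ $x$, Thouless gives $\Phi^{\nu_\infty}(x) = \log A_\infty$ for $\rho_\fre$-a.e.\ $x$. Integrating this against $d\rho_\fre$ and using Fubini,
\begin{equation*}
\log A_\infty = \int \Phi^{\nu_\infty}\, d\rho_\fre = \int \Phi^{\rho_\fre}\, d\nu_\infty \geq \log\mathrm{cap}(\fre),
\end{equation*}
since $\nu_\infty$ is a probability measure supported in $\fre$ and $\Phi^{\rho_\fre} \geq \log\mathrm{cap}(\fre)$ everywhere. The Stahl--Totik universal upper bound $\limsup_n (a_1\cdots a_n)^{1/n} \leq \mathrm{cap}(\fre)$ gives the reverse inequality, forcing $A_\infty = \mathrm{cap}(\fre)$. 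Hence $\Phi^{\nu_\infty}(x) = \log\mathrm{cap}(\fre)$ for $\rho_\fre$-a.e.\ $x \in \fre$, and the uniqueness of the equilibrium measure closes the argument: $\nu_\infty = \rho_\fre$.

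The main obstacle sits in the last uniqueness step. The identity $\Phi^{\nu_\infty} = \log\mathrm{cap}(\fre)$ is known only $\rho_\fre$-a.e., whereas the standard uniqueness theorems for the equilibrium measure require control quasi-everywhere on $\fre$. One bridges the gap by exploiting upper-semicontinuity of $\Phi^{\nu_\infty}$ together with the maximum (domination) principle of potential theory, which upgrades the $\rho_\fre$-a.e.\ statement to the pointwise bound $\Phi^{\nu_\infty}(x) \leq \log\mathrm{cap}(\fre)$ throughout $\fre$; combined with $I(\nu_\infty) \geq I(\rho_\fre)$ (by energy minimization) and the reverse inequality from integrating $\Phi^{\nu_\infty}$ against $\nu_\infty$, one deduces that $\nu_\infty$ attains the minimum energy and so coincides with $\rho_\fre$. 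A secondary subtlety is justifying the existence of $A_\infty$ as a genuine limit from merely the existence of $d\nu_\infty$; however, the sandwich inequality above produces this existence as a byproduct, so it can be folded into the same argument.
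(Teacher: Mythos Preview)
The paper does not give a proof of this theorem; it is quoted from \cite{EqMC} (Theorem~1.15 there), so there is no in-paper argument to compare against and your proposal has to stand on its own.

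Your overall strategy---Thouless formula, Frostman, then an energy argument---is the right one, and the $(\Rightarrow)$ direction as well as the derivation of $A_\infty=\mathrm{cap}(\fre)$ in the $(\Leftarrow)$ direction are fine. The final ``uniqueness'' paragraph, however, has a direction error that breaks the argument as written. You assert that upper semicontinuity of $\Phi^{\nu_\infty}$ together with the domination principle upgrades the $\rho_\fre$-a.e.\ equality to the pointwise bound $\Phi^{\nu_\infty}(x)\leq \log\mathrm{cap}(\fre)$ on $\fre$, and then feed this into the energy comparison. But integrating an \emph{upper} bound on $\Phi^{\nu_\infty}$ against $d\nu_\infty$ yields a \emph{lower} bound on $I(\nu_\infty)=-\int\Phi^{\nu_\infty}\,d\nu_\infty$, which is the trivial inequality $I(\nu_\infty)\geq I(\rho_\fre)$ you already have from energy minimization; it does not give the reverse inequality you need. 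Moreover, the claimed $\leq$ bound does not follow from the tools you cite (upper semicontinuity goes the other way).

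The fix is short and does not require any domination principle: use the elementary fact that the Lyapunov exponent satisfies $\gamma(x)\geq 0$ for every real $x$. Since you have already shown $A_\infty=\mathrm{cap}(\fre)$, Thouless gives
\[
\Phi^{\nu_\infty}(x)=\gamma(x)+\log A_\infty\geq \log\mathrm{cap}(\fre)
\quad\text{for all }x\in\bbR\supset\mathrm{supp}(\nu_\infty).
\]
Integrating against $d\nu_\infty$ yields $I(\nu_\infty)\leq -\log\mathrm{cap}(\fre)=I(\rho_\fre)$, and combined with the minimizing property $I(\nu_\infty)\geq I(\rho_\fre)$ you get equality; uniqueness of the energy minimizer then forces $\nu_\infty=\rho_\fre$. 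With this correction the proof is complete. (In the ergodic setting of Section~\ref{s7} the ``secondary subtlety'' about the existence of $A_\infty$ is a non-issue: it exists by the Birkhoff ergodic theorem applied to $\log a_n$.)
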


In particular, for examples where \eqref{7.3} fails on a set of positive Lebesgue measure in $\fre$
(e.g., \cite{Bour,Bour2002,FK2005,FK2006}), \eqref{7.2} may not hold. On the other hand, for examples
like the almost Mathieu equation where it is known that \eqref{7.3} holds on all of $\fre$ (see \cite{BJ}),
\eqref{7.2} holds. The moral is that \eqref{7.2} holds some, but not all, of the time for ergodic Jacobi
matrices.

\smallskip
2. \ Here is an interesting example that provides a deterministic problem where one has strong clock behavior
but with a density of zeros, $\rho_\infty$, which is not $\rho_\fre$. Let $d\mu$ be a measure on $[-2,2]$
of the form ($N$ is a normalization constant)
\begin{equation} \lb{7.4}
d\mu(x) = N^{-1} \biggl[ \chi_{[-1,1]} (x)\, dx + \sum_{n=1}^\infty e^{-n^2} \delta_{x_n}\biggr]
\end{equation}
where $\{x_n\}$ is a dense subset of $[-2,2]\setminus (-1,1)$. Then, as in Example~5.8 of \cite{EqMC},
$\rho_\infty$ exists and is the equilibrium measure for $[-1,1]$ (not $\fre=[-2,2]$). Moreover, the
method of \cite{Lub} shows that for $x\in (-1,1)$,
\begin{equation} \lb{7.5}
\f{1}{n+1}\, K_n (x,x) \to \f{\rho_\infty (x)}{N^{-1}}
\end{equation}
Using either the method of this paper (i.e., of \cite{Lub-jdam}) or the method of \cite{Lub}, one proves
universality with $\rho_\infty$.

\smallskip
3. \ Example~5.8 of \cite{EqMC} provides a measure with $\sigma_\ess(\mu)=[-2,2]$ but $\Sigma_\ac =
[-2,0]$ and where $\nu_n$ has multiple weak limits, including the equilibrium measures for $[-2,0]$
and for $[-2,2]$. By general principles \cite{StT}, the set of limits is connected, so uncountable.
One would like to prove that quasi-clock behavior nevertheless holds for the a.c.\ spectrum of
this model as this will
provide a key test for the conjecture that quasi-clock behavior always holds on $\Sigma_\ac$.

\smallskip
4. \ What has sometimes been called the Schr\"odinger conjecture (see \cite{MMG}) says that for any Jacobi
matrix  and a.e.\ $x\in\Sigma_\ac(\mu)$, we have a solution, $u_n$, with
\begin{equation} \lb{7.6}
0 < \inf_n \, \abs{u_n} \leq \sup_n \, \abs{u_n} <\infty
\end{equation}
and $u_{-1}=0$. Invariance of $\Sigma_\ac$ under rank one perturbations then proves that for a.e.\ $x\in
\Sigma_\ac(\mu)$, the transfer matrix is bounded. Thus, Theorem~3 in the strong form would always be
applicable.

\smallskip
5. \ While \eqref{6.16} is harmless since it only eliminates at most one $x$, one can ask if \eqref{6.17}
holds even if \eqref{6.16} fails. Using periodic problems, it is easy to construct ergodic cases where
$\arg u_n^+ =-\pi n/2$, so \eqref{6.29} provides no information on $I(\omega)$. Nevertheless, in these
cases, one can show $R(\omega)=I(\omega)=0$. We have not been able to find an example where for a set of
positive measure $\omega$'s, $s_{2n}(\omega)=n\pi$, $s_{2n+1}(\omega) = n\pi+\varphi$ with $\varphi$ some
fixed point in $(0,\pi)\setminus \{\f{\pi}{2}\}$. In that case, it might happen that $R(\omega)\neq 0$,
$I(\omega)\neq 0$. So it remains open if we need to exclude the $x$ with \eqref{6.16}.

\smallskip
6. \ While we could use soft methods in Section~\ref{s3}, at one point in our research we used an explicit
formula for the derivative of $\f{1}{n} K_n(x_0 + \f{a}{n}, x_0 + \f{a}{n})$ as a function of $a$ that may
be useful in other contexts, so we want to mention it. We start with a variation of parameters formula
(discussed, e.g., in \cite{JL,KKL}) that, in terms of the second kind polynomials of \eqref{1.38},
\begin{equation} \lb{7.7}
p_n(x)-p_n(x_0) = (x-x_0) \sum_{m=0}^{n-1} (p_n(x_0) q_m(x_0)-p_m(x_0) q_n(x_0)) p_m(x)
\end{equation}
which implies
\begin{equation} \lb{7.8}
p'_n (x_0) =\sum_{m=0}^{n-1} (p_n(x_0) q_m(x_0) - p_m(x_0) q_n(x_0)) p_m(x_0)
\end{equation}

Since
\begin{equation} \lb{7.9}
\left. \f{d}{da}\, \f{1}{n}\, K_n \biggl(x_0 + \f{a}{n}\, , x_0 + \f{a}{n}\biggr) \right|_{a=0} =
\f{1}{n^2} \sum_{j=0}^n 2p'_j(x_0) p_j(x_0)
\end{equation}
this leads to
\begin{equation} \lb{7.10}
\begin{split}
&\left. \f{d}{da}\,  \f{1}{n}\, K_n\biggl(x_0 + \f{a}{n}, x_0 + \f{a}{n}\biggr) \right|_{a=0} \\
&\quad = \f{2}{n^2} \sum_{j=0}^n \biggl[ p_j(x_0)^2 \biggl(\, \sum_{k=0}^j p_k(x_0) q_k(x_0)\biggr)
-q_j (x_0) p_j(x_0) \biggl(\, \sum_{k=0}^j p_k(x_0)^2 \biggr) \biggr]
\end{split}
\end{equation}

As noted in \cite{CD}, if $\f{1}{n} \sum_{j=0}^n p_j(x_0)^2$ and $\f{1}{n} \sum_{j=0}^n p_j(x_0) q_j(x_0)$
have limits and $\sup_n [\f{1}{n} \sum_{j=0}^n q_j(x_0)^2]<\infty$, then the right side of \eqref{7.10}
goes to $0$.

\bigskip

\end{document}